    \newcommand{\ZZ}    {\Bbb{Z}}
    \newcommand{\RR}    {\Bbb{R}}
\begin{document}

\title{The realization problem for discrete Morse functions on trees
}


\author{Yuqing Liu         \and
        Nicholas A. Scoville 
}


\institute{Y. Liu \at
              Department of Math and CS \\
              Ursinus College\\
              601 E. Main Street\\
              Collegeville, PA 19426
              \email{yuliu@ursinus.edu}           
           \and
           N. Scoville \at
              Department of Math and CS \\
              Ursinus College\\
              601 E. Main Street\\
              Collegeville, PA 19426
              \email{nscoville@ursinus.edu}
}

\date{Received: date / Accepted: date}

\maketitle

\begin{abstract}
We introduce a new notion of equivalence of discrete Morse functions on graphs called persistence equivalence.  Two functions are considered persistence equivalent if and only if they induce the same persistence diagram.  We compare this notion of equivalence to other notions of equivalent discrete Morse functions.  We then compute an upper bound for the number of persistence equivalent discrete Morse functions on a fixed graph and show that this upper bound is sharp in the case where our graph is a tree.  This is a version of the ``realization problem" of the persistence map. We conclude with an example illustrating our construction.
\keywords{persistent homology \and barcode  \and discrete Morse theory \and trees}
 \subclass{55P99 \and 05C05 \and 57M15}
\end{abstract}

\section{Introduction}

Since its inception in the early 2000s, persistent homology has almost single handedly brought topology to the forefront of mathematics.  A panacea of sorts, persistence has been used to study statistical mechanics \cite{mech17}, hypothesis testing \cite{Hyp14}, image analysis \cite{Carl09}, complex networks \cite{Sco17}, and many other phenomena. Part of the utility of persistent homology is that it acts as a complete characterization for certain collections of simplicial complexes, as there have been several recent results allowing one to reconstruct a simplicial complex from certain collections of persistence diagrams \cite{Turner2014}, \cite{Fasy2018}. Hence the collection of all persistence diagrams of a fixed simplicial complex is an important object to understand.\\

\noindent One means of studying this object is to study the persistence map, which is the map that takes functions on $X$ to their associated persistence diagram. In \cite{Curry-2017}, Justin Curry proposes what he calls the Realization Problem which asks ``given a space $X$ and particular type of function $f$, what is the image of the persistence map?" He answers this question in the case where $X$ is an interval and moreover, characterizes the fiber of this map.  In this paper, we study the Realization Problem in a kind of analogous case to the one studied by Curry; that is, we study it when $X$ is an abstract $1$-dimensional connected simplicial complex with $b_1(X)=0$ i.e., a tree, along with $f$ a discrete Morse function on the tree. A close cousin of persistent homology, discrete Morse theory is a topological tool due to Robin Forman \cite{Forman-95} \cite{F-02} that can be used to simplify a simplicial complex. Among other things, a discrete Morse function on a simplicial complex naturally gives rise to a filtration, and a filtration gives rise to a persistence diagram. Once we have a persistence diagram induced by the discrete Morse function, we introduce and study a new notion of equivalence of discrete Morse functions on graphs (although this may be defined on any simplicial complex) called \textbf{persistence equivalence}.  Two discrete Morse functions on a graph $G$ are called persistence equivalent if $D_f=D_g$ where $D_f$ is the persistence diagram induced by $f$.  We then count and construct all discrete Morse functions up to persistence equivalence on a fixed tree.\\

\noindent The idea is that it is easy to compute the persistent homology of a discrete Morse function on a tree without having to resort to a matrix.  This is done in Lemma \ref{lem compute persistence}.  The critical vertices birth new components while a critical edge connecting two trees kills a bar corresponding to the tree whose minimum value is greater than the minimum value of the other tree. We give the background in discrete Morse theory and persistent homology and introduce persistence equivalence in Section \ref{Graphs, discrete Morse theory, and persistence}.  In Section \ref{Relation with other notions of equivalence}, we compare our new notion of equivalence with ones currently found in the literature. Using a slight variation of the standard definition of a discrete Morse functions in order to ensure finiteness, we will count the total number of discrete Morse functions with a fixed number of critical values up to persistence equivalence on a tree. This is accomplished by providing a combinatorial upper bound in Corollary \ref{cor tree count}. Theorem \ref{thm counting on trees} then provides a method to construct any such discrete Morse function with a desired barcode.  We end in Section \ref{An example} with an example illustrating our construction.

\section{Graphs, discrete Morse theory, and persistence}\label{Graphs, discrete Morse theory, and persistence}

In this section, we introduce the background and notation that is needed throughout the body of this paper. We begin by reviewing the basics of graph theory.

\subsection{Graphs}

A \textbf{graph} $G=(V,E)$ is a non-empty finite set $V$ along with a symmetric, irreflexive relation $E$ on $V$. The set $V=V(G)$ is called the \textbf{vertex set} of $G$
while $E=E(G)$ is called the \textbf{edge set}. If $(u,v)\in E$, we write $e=uv$ or  $u,v< e$ to denote the edge $e$ with \textbf{endpoints} $u$ and $v$.  In this case, we say that $u$ and $v$ are \textbf{adjacent} while $e$ and $u$ are \textbf{incident}. We will use \textbf{simplex} (plural: \textbf{simplices}) to refer to a vertex or an edge of $G$, and use a Greek letter such as $\sigma$ to denote either a vertex or an edge. If a vertex $v$  is incident with exactly one edge $e$, the pair $\{v,e\}$ is called a \textbf{free pair} or a \textbf{leaf}.\\

\noindent A \textbf{path} in $G$ is a list $v_1, e_1, v_2, e_2, \ldots, v_k, e_k, v_{k+1}$ of vertices and edges such that the edge $e_i$ has endpoints $v_{i}$ and $v_{i+1}$ for $1\leq i \leq k$.  We further require that no edge is repeated. If there is a path between any two vertices of $G$, we define $G$ to be \textbf{connected}.\\

\noindent A \textbf{cycle} in a graph is a path with at least three edges that begins and ends at the same vertex and never repeats a vertex (other than the starting and ending vertex). In other words, a \textbf{cycle} is a path $v_1, e_1, v_2, \ldots, e_k, v_{k+1}$ such that no vertex is repeated other than $v_1=v_{k+1}$.\\

\noindent Because our main construction in Theorem \ref{thm counting on trees} is on a special kind of graph called a tree, we recall several important characterizations of trees.  They will be utilized without further reference.

\begin{theorem}\label{thm char of trees}(Characterization of trees) Let $G$ be a connected graph with $v$ vertices and $e$ edges. The following are equivalent:
\begin{enumerate}
    \item[a)] Every two vertices of $G$ are connected by a unique path.
    \item[b)] $v=e+1$
    \item[c)] $G$ contains no cycles
    \item[d)] $b_1(G)=0$
    \item[e)] The removal of any edge from $G$ results in a disconnected graph.
\end{enumerate}

\end{theorem}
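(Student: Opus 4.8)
The plan is to prove the five statements equivalent by establishing a cycle of implications among (a), (b), (c), (e), and to dispatch (d) by a one-line remark. Since $G$ is connected, its first Betti number is $b_1(G) = e - v + 1$ (equivalently, $\mathrm{rank}\,H_1(G) = e - v + c$ with $c = 1$ the number of components), so (b) and (d) are literally the same condition; I would state this at the outset and then set (d) aside. For the remaining conditions I would run the loop (a) $\Rightarrow$ (c) $\Rightarrow$ (e) $\Rightarrow$ (b) $\Rightarrow$ (c) $\Rightarrow$ (a), which yields that (a), (b), (c), (e) are all equivalent.

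Here is how each arrow goes. For (a) $\Rightarrow$ (c): a cycle $v_1, e_1, \dots, e_k, v_1$ (necessarily with $k \geq 3$, as our graphs have no loops or multiple edges) furnishes two distinct paths from $v_1$ to $v_2$ --- one along $e_1$, the other the long way around --- contradicting uniqueness. For (c) $\Rightarrow$ (e): if deleting $e = uv$ left $G$ connected, there would be a $u$--$v$ path avoiding $e$, of length at least $2$ since there are no multiple edges, and appending $e$ closes it into a cycle. For (b) $\Rightarrow$ (c): a cycle in $G$ would contain an edge whose deletion keeps $G$ connected, leaving a connected graph on $v$ vertices with only $e - 1 = v - 2$ edges, which is impossible because a connected graph on $v$ vertices has at least $v - 1$ edges. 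For (c) $\Rightarrow$ (a): connectivity gives at least one $u$--$w$ path, and two distinct ones would, on tracking the first vertex at which they diverge and the first at which they re-converge, produce a cycle.

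The step that needs the most care is (e) $\Rightarrow$ (b), which I would do by induction on $e$: the base case $e = 0$ forces $G$ to be a single vertex; for the inductive step, choose any edge $\epsilon$, note that $G - \epsilon$ is disconnected by (e) and has exactly two components $G_1, G_2$ (deleting an edge raises the component count by at most one), check that each $G_i$ inherits property (e) --- if some edge of $G_1$ could be removed without disconnecting $G_1$, then $G_2$ together with $\epsilon$ would keep $G$ connected as well --- and add the resulting counts $v_i = e_i + 1$, remembering to reinstate $\epsilon$. Two elementary facts get used along the way (edge deletion changes the number of components by at most one, and a connected graph on $v$ vertices has at least $v - 1$ edges); each is itself a short induction on the number of edges, and I would either cite them or insert a one-line justification. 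Beyond that, the argument is bookkeeping with no real obstacle.
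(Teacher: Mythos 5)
Your proposal is correct, but note that the paper does not actually prove this theorem: it states the characterization and defers the proof to a standard graph theory text (Chartrand--Lesniak--Zhang, Chapter 2.2), so there is no in-paper argument to compare against. What you supply is a self-contained proof along entirely standard textbook lines, and the logical structure is sound: identifying (b) with (d) via $b_1(G)=e-v+1$ for connected $G$, and then the arrows (a) $\Rightarrow$ (c), (c) $\Rightarrow$ (e), (e) $\Rightarrow$ (b), (b) $\Rightarrow$ (c), (c) $\Rightarrow$ (a) do make all four of (a), (b), (c), (e) mutually equivalent, with the induction for (e) $\Rightarrow$ (b) carried out correctly (two components after deleting $\epsilon$, heredity of property (e), and the count $v=v_1+v_2=(e_1+1)+(e_2+1)=e+1$). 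One small point to tighten if this were written out in the paper's own terms: the paper's notion of ``path'' forbids repeated edges but not repeated vertices, while its ``cycle'' must have no repeated vertices and at least three edges; so in (c) $\Rightarrow$ (e) and (c) $\Rightarrow$ (a) you should pass to a shortest (hence vertex-simple) $u$--$v$ path, respectively to vertex-simple representatives of the two distinct paths, before closing up or extracting a cycle. With that adjustment, and the two elementary lemmas you flag (edge deletion changes the component count by at most one; a connected graph on $v$ vertices has at least $v-1$ edges) either cited or proved by a one-line induction, your argument is complete; it buys the reader a self-contained treatment where the paper simply outsources the proof.
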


\noindent A connected graph that satisfies any of the above characterizations is called a \textbf{tree}. Proofs of the equivalence of the statements may be found in any graph theory textbook(e.g. \cite[Chapter 2.2]{Chartrand16}).

\subsection{Discrete Morse theory on graphs}

We use a slightly more restrictive definition of a discrete Morse function than is normally given in the literature. For a discussion on the reason for the choices made in the definition, see Remark \ref{rem dmf}.

\begin{definition}\label{defn DMF} Let $G$ be a graph with $n$ vertices and edges,  $f\colon G\to [0,n]$ a function.  Then $f$ is \textbf{monotone} if whenever $v < e$, then $f(v) \leq f(e)$. We say $f$ is a \textbf{discrete Morse function} if $f$ is a monotone function with $\min(f)=0$ which is at most $2-1$ where if $f(\sigma) = f(\tau)$, then $\tau< \sigma$ or $\sigma<\tau$.  Furthermore, we require that if $f$ is 1-1 on $f(\sigma)$, then $f(\sigma)\in \mathbb{N}.$ Such a value is called a \textbf{critical value} and $\sigma$ is a \textbf{critical vertex (critical $0$-simplex)} or a \textbf{critical edge (critical $1$-simplex).} If $\sigma$ is not critical, $\sigma$ is called \textbf{regular}. 
\end{definition}

\noindent Under this definition, it is easy to see that $\min\{f\}=0$ will always be a critical value.  It is also not difficult to show \cite[Lemma 2.5]{Forman-95} that regular simplices come in pairs.  Further embedded in Definition \ref{defn DMF} is the fact that the vertex/edge regular pair is given the same value under the discrete Morse function.  This condition is called \textbf{flat}. For those concerned that our definition is too restrictive, Uli Bauer has shown that every discrete Morse function is homologically equivalent to one which is flat \cite[Proposition 2.19]{BauerThesis}.\\

\noindent One of the fundamental results in discrete Morse theory is the (weak) discrete Morse inequalities, relating the number of critical simplices of a discrete Morse function to the Betti numbers.  We will utilize this theorem in Lemma \ref{lem crit even}.

\begin{theorem}\label{thm weak dmi}\cite[Cor 3.7]{Forman-95}(Weak discrete Morse inequalities for graphs) Let $G$ be a graph and $f$ a discrete Morse function of $G$ with the number of critical $i$-simplices of $f$ denoted by $m_i$, $i=0,1$.  Then\\

\noindent (i) $m_0\geq b_0$ and $m_1\geq b_1$ where $b_i$ denotes the $i^{th}$ Betti number of $G$.\\
(ii) $b_0-b_1=m_0-m_1$.
\end{theorem}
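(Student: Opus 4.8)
The plan is to observe that in the graph setting both parts of the theorem are essentially bookkeeping consequences of the fact that the regular simplices of $f$ occur in incident vertex--edge pairs, so that no appeal to the general collapsing theorem of discrete Morse theory is needed. (Alternatively one could quote Forman's structure theorem --- that $G$ is homotopy equivalent to a CW complex with exactly $m_i$ cells of dimension $i$ --- and read the conclusion off the resulting two-term cellular chain complex $0\to C_1\to C_0\to 0$; but for graphs the direct route is shorter.) I would prove part (ii) first, because part (i) then follows from it with essentially no extra work.

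For (ii), I would use the remark following Definition \ref{defn DMF} (equivalently \cite[Lemma 2.5]{Forman-95}): the regular simplices of $f$ split into pairs $\{\sigma,\tau\}$ with $\sigma<\tau$ and $f(\sigma)=f(\tau)$, and since $G$ is one-dimensional each such pair consists of a vertex together with an incident edge. Hence $f$ has the same number $k$ of regular vertices as of regular edges, so $m_0=|V(G)|-k$ and $m_1=|E(G)|-k$, giving $m_0-m_1=|V(G)|-|E(G)|=\chi(G)$. Since $G$ is a graph, the Euler--Poincar\'{e} relation $\chi(G)=b_0-b_1$ finishes (ii).

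For (i), the crux is the claim that on every connected component $C$ of $G$ the vertex $w$ at which $f|_C$ attains its minimum is critical. Supposing otherwise, $w$ would be paired with an incident edge $e=wu$ satisfying $f(e)=f(w)$; monotonicity would force $f(u)\le f(e)=f(w)$, while $f(w)=\min_C f\le f(u)$ because $u\in C$, so $f(u)=f(w)$. But $u\ne w$ (the graph is irreflexive) and neither of $u,w$ is a face of the other, contradicting the condition in Definition \ref{defn DMF}. Thus each component contributes a distinct critical vertex, so $m_0\ge b_0$; and then $m_1=m_0-(b_0-b_1)=(m_0-b_0)+b_1\ge b_1$ by (ii).

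I do not expect a real obstacle here --- the single ingredient that is not pure arithmetic is the pairing of regular simplices into incident vertex--edge pairs of equal value, which is already guaranteed by the ``flat'' hypothesis of Definition \ref{defn DMF} together with \cite[Lemma 2.5]{Forman-95}. If one wanted to sidestep even that, one could instead build the Morse filtration $\emptyset=G_0\subset G_1\subset\cdots\subset G_N=G$ by inserting simplices in nondecreasing order of $f$-value and track $(b_0,b_1)$ along the way: a critical vertex raises $m_0$ and $b_0$ by $1$, a regular pair leaves both Betti numbers fixed, and a critical edge raises $m_1$ by $1$ and either lowers $b_0$ or raises $b_1$ by $1$. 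Summing these increments recovers (i) and (ii) simultaneously, and is close to the computation that will appear in Lemma \ref{lem compute persistence}.
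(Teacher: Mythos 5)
Your argument is correct, but it is worth noting that the paper does not prove this statement at all: it is quoted as \cite[Cor.\ 3.7]{Forman-95}, whose proof runs through the general machinery of discrete Morse theory (the collapsing/structure theorem and the standard algebraic derivation of the Morse inequalities, valid in all dimensions and for non-flat functions). What you give instead is a short, self-contained argument tailored to dimension one and to the restricted ``flat'' Definition \ref{defn DMF}: since a regular simplex is by definition one whose value is shared, and the sharing partner is forced to be a unique incident simplex, the regular simplices form a perfect matching between regular vertices and regular edges, so $m_0-m_1=|V|-|E|=\chi(G)=b_0-b_1$, which is (ii); and your observation that the minimum vertex of each component must be critical (else its paired edge $e=wu$ forces $f(u)=f(w)$ on two non-incident simplices, violating the definition) gives $m_0\ge b_0$, with $m_1\ge b_1$ then following from (ii). Both steps are sound under the paper's definition, and your fallback filtration argument would work too. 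The trade-off is the expected one: your route buys elementarity and independence from \cite{Forman-95}, at the cost of proving the statement only for graphs and only for the flat, almost-injective Morse functions of Definition \ref{defn DMF}, whereas the citation covers the theorem in its full generality (which is all the paper needs, since it only ever invokes it through Lemma \ref{lem crit even} in this restricted setting).
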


\noindent Let $f\colon G \to [0,n]$ be a discrete Morse function. For any $a\in \RR$, we define \textbf{level subcomplex $a$} by $G_a:=\{\sigma\in G \colon f(\sigma)\leq a\}$.  Note that by the fact that $f$ is flat, $G_a$ is always a subgraph. If $0=c_0<c_1<\ldots < c_{m-1}$ are the critical values of $f$, we consider the sequence of subcomplexes
$$
\{v\}=G_{c_0}\subseteq G_{c_1}\subseteq \ldots \subseteq G_{c_{m-1}}
$$
called the \textbf{filtration of $G$ induced by the discrete Morse function $f$}. This induced filtration will be used in the next section.

\subsection{Persistent homology}

Let $G$ be a graph.  Suppose we have a filtration

$$G_0\subseteq \ldots \subseteq G_{m-1}.$$

\noindent For $i\leq j$, there is an inclusion function $f^{i,j}\colon G_i\to G_j$.  Passing to homology, we obtain a linear transformation $f_p^{i,j}\colon H_p(G_i)\to H_p(G_j)$.  The \textbf{$p^{th}$ persistent homology groups}, denoted $H^{i,j}_p$, is defined by $H^{i,j}_p:=\mathrm{im} (f^{i,j}_p)$.   The \textbf{$p^{th}$-persistent Betti numbers} are the corresponding Betti numbers, $\beta_p^{i,j}:=\mathrm{rank} H^{i,j}_p$.\\

 \noindent A class $[\alpha]\in H_p(G_i)$ is said to be \textbf{born at $G_i$ or at time $i$} if $[\alpha]$ is not in the image of $f_p^{i-1,i}$. A class $[\alpha]\in H_p(G_i)$ is said to \textbf{die at $G_{j+1}$ or at time $j+1$} if $f^{i,j}_p([\alpha])$ is not in the image of $f_p^{i-1,j}$ but $f_p^{i,j+1}([\alpha])$ is in the image of $f^{i-1,j+1}_p$. If $\alpha$ is born at $i$ and dies at $j+1$, we call $(i,j+1)$ a \textbf{persistence pair}. If $\sigma$ is born and never dies, then $\sigma$ is called a \textbf{point at infinity}. Plotting all persistence pairs in the Euclidean plane along with all points at infinity (represented by a $y$-value greater than the maximum death time) yields the \textbf{persistence diagram induced by the filtration}, denoted $D$.\\

\noindent As noted above, a discrete Morse function with critical values $c_0<c_1<\ldots c_{m-1}$ induces a filtration
$$
G_{c_0}\subseteq G_{c_1}\subseteq \ldots G_{c_{m-1}}
$$
where $G_{c_i}$ is the level subcomplex of $G$ at level $c_i$.  Hence, a discrete Morse function $f$ induces a persistence diagram $D_f$.\\

\noindent We are now ready to give the main object of study in this paper.

\begin{definition}Two discrete Morse functions $f,g\colon G \to [0,n]$ are \textbf{persistence equivalent} if $D_{f}= D_{g}$.
\end{definition}

\noindent It is standard in the literature to interchange the persistence diagram and corresponding barcode when there is no possibility of confusion. Hence, another viewpoint that we will adopt in this paper is to consider two functions persistence equivalent if their corresponding barcodes are equal, where equality is given up to permutation of the vertical stacking of the bars.

\begin{remark}\label{rem dmf}
When computing persistence, the definition of a discrete Morse function (Definition \ref{defn DMF}) ensures that all births and deaths (corresponding to critical values) occur only at integer values and that furthermore, the first birth occurs at time $0$ and that the barcode is completed by time $n$, where $n$ is the number of simplices of $G$.  In addition, there can be at most one event (either a birth or a death) at any time.  So for example, the following could be a barcode induced by a discrete Morse function on a graph with $11$ simplices

$$
\begin{tikzpicture}[scale=.5][thick]
    \draw (1,1) -- (21,1);
    \draw (1,1) -- (1,10);
    \draw[thin, dashed] (3,1) -- (3,10);
    \draw[thin, dashed] (5,1) -- (5,10);
    \draw[thin, dashed] (7,1) -- (7,10);
    \draw[thin, dashed] (9,1) -- (9,10);
    \draw[thin, dashed] (11,1) -- (11,10);
    \draw[thin, dashed] (13,1) -- (13,10);
    \draw[thin, dashed] (15,1) -- (15,10);
    \draw[thin, dashed] (17,1) -- (17,10);
    \draw[thin, dashed] (19,1) -- (19,10);
    \draw[thin, dashed] (21,1) -- (21,10);

    \node at (1,.5) {$0$};
    \node at (3,.5) {$1$};
    \node at (5,.5) {$2$};
    \node at (7,.5) {$3$};
    \node at (9,.5) {$4$};
    \node at (11,.5) {$5$};
    \node at (13,.5) {$6$};
    \node at (15,.5) {$7$};
    \node at (17,.5) {$8$};
    \node at (19,.5) {$9$};
    \node at (21,.5) {$10$};

    \node[black] at (-.2,5) {$b_0$};

    \draw[black, ultra thick, ->] (1,3) -- (21,3);
    \draw[black, ultra thick] (5,5) -- (11,5);
    \draw[black, ultra thick] (9,7) -- (15,7);
    \draw[black, ultra thick] (7,9) -- (17,9);

\end{tikzpicture}
$$
but this one could not

$$
\begin{tikzpicture}[scale=.5][thick]
    \draw (1,1) -- (21,1);
    \draw (1,1) -- (1,10);
    \draw[thin, dashed] (3,1) -- (3,10);
    \draw[thin, dashed] (5,1) -- (5,10);
    \draw[thin, dashed] (7,1) -- (7,10);
    \draw[thin, dashed] (9,1) -- (9,10);
    \draw[thin, dashed] (11,1) -- (11,10);
    \draw[thin, dashed] (13,1) -- (13,10);
    \draw[thin, dashed] (15,1) -- (15,10);
    \draw[thin, dashed] (17,1) -- (17,10);
    \draw[thin, dashed] (19,1) -- (19,10);
    \draw[thin, dashed] (21,1) -- (21,10);

    \node at (1,.5) {$0$};
    \node at (3,.5) {$1$};
    \node at (5,.5) {$2$};
    \node at (7,.5) {$3$};
    \node at (9,.5) {$4$};
    \node at (11,.5) {$5$};
    \node at (13,.5) {$6$};
    \node at (15,.5) {$7$};
    \node at (17,.5) {$8$};
    \node at (19,.5) {$9$};
    \node at (21,.5) {$10$};

    \node[black] at (-.2,5) {$b_0$};

    \draw[black, ultra thick, ->] (1,3) -- (21,3);
    \draw[black, ultra thick] (14,5) -- (17,5);
    \draw[black, ultra thick] (9,7) -- (15,7);
    \draw[black, ultra thick] (7,9) -- (9,9);

\end{tikzpicture}
$$
since there is a birth and a death at $t=4$ and furthermore, there is a birth at $t=6.5\not \in \ZZ$.

\end{remark}

\noindent Hence, given these conditions, the total number barcodes that one can obtain from a discrete Morse function on a fixed graph is finite.  In Section \ref{Counting persistence equivalence classes}, we will give an upper bound for the number of persistence equivalent discrete Morse functions on a graph, and show that this estimate is sharp in the special case where $G=T$ is a tree.  Bur first, we compare persistence equivalence with other notions of equivalence.

\section{Relation with other notions of equivalence}\label{Relation with other notions of equivalence}

\subsection{Forman equivalence}

Recall that two discrete Morse functions $f,g\colon G \to [0,n]$ defined on a graph are \textbf{Forman equivalent} if and only if $V_f=V_g$, where $V_f$ is the induced gradient vector field of $f$ \cite{Forman-95}. It is easy to see that neither persistence nor Forman equivalence imply each other.

\begin{example}
The following two discrete Morse functions are persistence equivalent but not Forman equivalent.

$$
\begin{tikzpicture}[scale=.75]

\node[inner sep=2pt, circle] (7) at (2,2) [draw] {};
\node[inner sep=2pt, circle] (9) at (4,2) [draw] {};
\node[inner sep=2pt, circle] (6) at (0,2) [draw] {};
\node[inner sep=2pt, circle] (1) at (0,4) [draw] {};
\node[inner sep=2pt, circle] (0) at (0,6) [draw] {};
\node[inner sep=2pt, circle] (2) at (-2,4) [draw] {};
\node[inner sep=2pt, circle] (4) at (2,4) [draw] {};

\path[style=semithick] (0) edge node[anchor=east]{\small{$1$}}(1);
\path[style=semithick] (1) edge node[anchor=east]{\small{$6$}}(6);
\path[style=semithick] (2) edge node[anchor=south]{\small{$5$}}(1);
\path[style=semithick] (1) edge node[anchor= south]{\small{$8$}}(4);
\path[style=semithick] (6) edge node[anchor=south]{\small{$7$}}(7);
\path[style=semithick] (7) edge  node[anchor= south]{\small{$9$}}(9);

\node[anchor = south ]  at (0) {\small{$0$}};
\node[anchor = north west]  at (1) {\small{$1$}};
\node[anchor =  east]  at (2) {\small{$2$}};
\node[anchor = west]  at (4) {\small{$4$}};
\node[anchor = north]  at (6) {\small{$6$}};
\node[anchor = north]  at (7) {\small{$7$}};
\node[anchor = north]  at (9) {\small{$9$}};
\end{tikzpicture}
$$
$$
\begin{tikzpicture}[scale=.75]

\node[inner sep=2pt, circle] (7) at (2,2) [draw] {};
\node[inner sep=2pt, circle] (9) at (4,2) [draw] {};
\node[inner sep=2pt, circle] (6) at (0,2) [draw] {};
\node[inner sep=2pt, circle] (1) at (0,4) [draw] {};
\node[inner sep=2pt, circle] (0) at (0,6) [draw] {};
\node[inner sep=2pt, circle] (2) at (-2,4) [draw] {};
\node[inner sep=2pt, circle] (4) at (2,4) [draw] {};

\path[style=semithick] (0) edge node[anchor=east]{\small{$8$}}(1);
\path[style=semithick] (1) edge node[anchor=east]{\small{$5$}}(6);
\path[style=semithick] (2) edge node[anchor=south]{\small{$3$}}(1);
\path[style=semithick] (1) edge node[anchor= south]{\small{$9$}}(4);
\path[style=semithick] (6) edge node[anchor=south]{\small{$6$}}(7);
\path[style=semithick] (7) edge  node[anchor= south]{\small{$7$}}(9);

\node[anchor = south ]  at (0) {\small{$0$}};
\node[anchor = north west]  at (1) {\small{$2$}};
\node[anchor =  east]  at (2) {\small{$3$}};
\node[anchor = west]  at (4) {\small{$9$}};
\node[anchor = north]  at (6) {\small{$4$}};
\node[anchor = north]  at (7) {\small{$6$}};
\node[anchor = north]  at (9) {\small{$7$}};
\end{tikzpicture}
$$

\noindent Using the exact same graphs, the example below also shows that Forman equivalence does not imply persistence equivalence.
$$
\begin{tikzpicture}[scale=.75]

\node[inner sep=2pt, circle] (7) at (2,2) [draw] {};
\node[inner sep=2pt, circle] (9) at (4,2) [draw] {};
\node[inner sep=2pt, circle] (6) at (0,2) [draw] {};
\node[inner sep=2pt, circle] (1) at (0,4) [draw] {};
\node[inner sep=2pt, circle] (0) at (0,6) [draw] {};
\node[inner sep=2pt, circle] (2) at (-2,4) [draw] {};
\node[inner sep=2pt, circle] (4) at (2,4) [draw] {};

\path[style=semithick] (0) edge node[anchor=east]{\small{$3$}}(1);
\path[style=semithick] (1) edge node[anchor=east]{\small{$4$}}(6);
\path[style=semithick] (2) edge node[anchor=south]{\small{$6$}}(1);
\path[style=semithick] (1) edge node[anchor= south]{\small{$1$}}(4);
\path[style=semithick] (6) edge node[anchor=south]{\small{$5$}}(7);
\path[style=semithick] (7) edge  node[anchor= south]{\small{$7$}}(9);

\node[anchor = south ]  at (0) {\small{$0$}};
\node[anchor = north west]  at (1) {\small{$3$}};
\node[anchor =  east]  at (2) {\small{$2$}};
\node[anchor = west]  at (4) {\small{$1$}};
\node[anchor = north]  at (6) {\small{$4$}};
\node[anchor = north]  at (7) {\small{$5$}};
\node[anchor = north]  at (9) {\small{$7$}};
\end{tikzpicture}
$$
$$
\begin{tikzpicture}[scale=.75]

\node[inner sep=2pt, circle] (7) at (2,2) [draw] {};
\node[inner sep=2pt, circle] (9) at (4,2) [draw] {};
\node[inner sep=2pt, circle] (6) at (0,2) [draw] {};
\node[inner sep=2pt, circle] (1) at (0,4) [draw] {};
\node[inner sep=2pt, circle] (0) at (0,6) [draw] {};
\node[inner sep=2pt, circle] (2) at (-2,4) [draw] {};
\node[inner sep=2pt, circle] (4) at (2,4) [draw] {};

\path[style=semithick] (0) edge node[anchor=east]{\small{$1$}}(1);
\path[style=semithick] (1) edge node[anchor=east]{\small{$2$}}(6);
\path[style=semithick] (2) edge node[anchor=south]{\small{$7$}}(1);
\path[style=semithick] (1) edge node[anchor= south]{\small{$6$}}(4);
\path[style=semithick] (6) edge node[anchor=south]{\small{$3$}}(7);
\path[style=semithick] (7) edge  node[anchor= south]{\small{$4$}}(9);

\node[anchor = south ]  at (0) {\small{$0$}};
\node[anchor = north west]  at (1) {\small{$1$}};
\node[anchor =  east]  at (2) {\small{$5$}};
\node[anchor = west]  at (4) {\small{$5$}};
\node[anchor = north]  at (6) {\small{$2$}};
\node[anchor = north]  at (7) {\small{$3$}};
\node[anchor = north]  at (9) {\small{$4$}};
\end{tikzpicture}
$$

\end{example}

\subsection{Homological equivalence}

In \cite{A-F-F-V-09}, Ayala et al. introduced the notion of homological equivalence and counted the number of discrete Morse functions up to homological equivalence in \cite{A-F-V-11} on all graphs.

\begin{definition}  Two discrete Morse functions $f$ and $g$ defined on a graph $G$ with critical values $a_0<a_1<\ldots < a_{m-1}$ and $c_0<c_1<\ldots < c_{m-1}$ respectively are \textbf{homologically equivalent} if $b_0(G_{a_i})=b_0(G_{c_i})$ and $b_1(G_{a_i})=b_1(G_{c_i})$ for all $0\leq i \leq m-1$.
\end{definition}

\noindent From the definitions, the following is immediate.

\begin{proposition}
If $f$ and $g$ are persistence equivalent, then $f$ and $g$ are homologically equivalent.
\end{proposition}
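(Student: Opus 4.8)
The plan is to unwind both definitions and show that equality of persistence diagrams forces equality of all persistent Betti numbers of the form $\beta_0^{i,i}$ and $\beta_1^{i,i}$, and that these in turn are exactly the Betti numbers of the level subcomplexes appearing in the homological equivalence condition. First I would note that since $f$ and $g$ are persistence equivalent they induce the same number $m$ of critical values (the length of the barcode along the horizontal axis is the same), so the two filtrations $G_{a_0}\subseteq\cdots\subseteq G_{a_{m-1}}$ and $G_{c_0}\subseteq\cdots\subseteq G_{c_{m-1}}$ have the same length and it makes sense to compare them index by index.

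Next I would recall the standard fact that the persistence diagram $D_f$ determines, and is determined by, the collection of persistent Betti numbers $\beta_p^{i,j}$ for all $i\le j$ and $p=0,1$; concretely, $\beta_p^{i,j}$ counts the number of points $(b,d)$ in $D_f$ (with multiplicity, and including points at infinity) satisfying $b\le i$ and $d>j$. Specializing to the diagonal case $i=j$, the number $\beta_p^{i,i}$ counts bars that are alive at time $i$, which is precisely $\mathrm{rank}\,H_p(G_{a_i})=b_p(G_{a_i})$ since $\beta_p^{i,i}=\mathrm{rank}\,(f_p^{i,i})=\mathrm{rank}\,H_p(G_{a_i})$ as $f^{i,i}$ is the identity. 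Because $D_f=D_g$, for every $i$ we get $b_p(G_{a_i})=\beta_p^{i,i}(f)=\beta_p^{i,i}(g)=b_p(G_{c_i})$ for $p=0,1$, which is exactly the defining condition for $f$ and $g$ to be homologically equivalent.

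The only genuine content to pin down is the identification $\beta_p^{i,i}=b_p(G_{a_i})$ together with the claim that $D_f$ determines all $\beta_p^{i,i}$; both are essentially bookkeeping about persistence diagrams, so I expect the main (modest) obstacle to be purely expository: making sure the indexing conventions for births, deaths, and points at infinity from the earlier subsection are applied consistently, in particular that points at infinity are correctly counted as ``alive at time $i$'' for every $i$ past their birth. Once the conventions are fixed, the argument is a one-line chase: $D_f = D_g \Rightarrow \beta_p^{i,i}(f)=\beta_p^{i,i}(g) \Rightarrow b_p(G_{a_i})=b_p(G_{c_i})$ for all $i$ and $p\in\{0,1\}$, which is homological equivalence. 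I would present it in roughly that order, stating the diagonal persistent Betti number identification as the key lemma-step and then concluding immediately.
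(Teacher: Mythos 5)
Your argument is correct; the paper gives no proof at all (it declares the proposition immediate from the definitions), and your diagonal persistent-Betti-number identification $\beta_p^{i,i}=b_p(G_{a_i})$, with these ranks read off from $D_f$ as the number of bars alive at that time, is exactly the intended unwinding. The one bookkeeping point worth stating explicitly, given the paper's convention that diagram coordinates are the critical values themselves, is that every critical value of a discrete Morse function on a graph produces an event (a critical vertex births an $H_0$ class, a critical edge either kills an $H_0$ class or births an $H_1$ class), so $D_f=D_g$ already forces the two functions to have the same number of critical values and indeed $a_i=c_i$ for all $i$, which is what licenses your index-by-index comparison of the level subcomplexes.
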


\noindent Of course, the converse is clearly false, as the following simple example illustrates.

\begin{example}
$$
\begin{tikzpicture}

\node[inner sep=2pt, circle] (0) at (0,0) [draw] {};
\node[inner sep=2pt, circle] (1) at (2,0) [draw] {};
\node[inner sep=2pt, circle] (2) at (4,0) [draw] {};

\path[style=semithick] (0) edge node[anchor=north]{\small{$1$}}(1);
\path[style=semithick] (1) edge
node[anchor= north]{\small{$3$}}(2);

\node[anchor = south east]  at (0) {\small{$0$}};
\node[anchor = south]  at (1) {\small{$1$}};
\node[anchor = south]  at (2) {\small{$2$}};

\end{tikzpicture}
$$

$$
\begin{tikzpicture}

\node[inner sep=2pt, circle] (0) at (0,0) [draw] {};
\node[inner sep=2pt, circle] (1) at (2,0) [draw] {};
\node[inner sep=2pt, circle] (3) at (4,0) [draw] {};

\path[style=semithick] (0) edge node[anchor=north]{\small{$2$}}(1);
\path[style=semithick] (1) edge node[anchor= north]{\small{$3$}}(3);

\node[anchor = south east]  at (0) {\small{$0$}};
\node[anchor = south]  at (1) {\small{$1$}};
\node[anchor = south]  at (3) {\small{$3$}};

\end{tikzpicture}
$$
\end{example}

\subsection{Graph equivalence}\label{Graph equivalence}

The second author introduced the following notion of equivalence of discrete Morse functions on graphs in \cite{Aaronson2014}.

\begin{definition}\label{graph}
        Let $f,g \colon G \rightarrow [0,n]$ be two discrete Morse functions on a graph G with critical values $a_{0},a_{1},...,a_{m-1}$ and $c_{0},c_{1},...,c_{m-1}$ respectively.  The functions $f$ and $g$ are said to be \textbf{graph equivalent} if $G_{a_{i}}\cong G_{c_{i}}$ for every $0 \leq i \leq m-1$; that is, each level subcomplex is isomorphic as graphs.
\end{definition}

\noindent It should be noted that this is not to be confused with the notion of graph equivalence recently introduced by Curry \cite{Curry-2017}.\\

\noindent Although graph equivalence is quite stringent, two discrete Morse functions which are graph equivalent are not necessarily persistence equivalent.

\begin{example}

$$
\begin{tikzpicture}[scale=.75]

\node[inner sep=2pt, circle] (4) at (0,0) [draw] {};
\node[inner sep=2pt, circle] (0) at (0,2) [draw] {};
\node[inner sep=2pt, circle] (1) at (0,4) [draw] {};
\node[inner sep=2pt, circle] (3) at (-2,2) [draw] {};
\node[inner sep=2pt, circle] (2) at (2,2) [draw] {};

\path[style=semithick] (1) edge node[anchor=west]{\small{$1$}}(0);
\path[style=semithick] (0) edge node[anchor=east]{\small{$5$}}(4);
\path[style=semithick] (3) edge node[anchor=north]{\small{$3$}}(0);
\path[style=semithick] (0) edge node[anchor= north]{\small{$2$}}(2);

\node[anchor = south east]  at (0) {\small{$0$}};
\node[anchor = south]  at (1) {\small{$1$}};
\node[anchor = east]  at (3) {\small{$3$}};
\node[anchor = west]  at (2) {\small{$2$}};
\node[anchor = north]  at (4) {\small{$4$}};

\end{tikzpicture}
$$

$$
\begin{tikzpicture}[scale=.75]

\node[inner sep=2pt, circle] (6) at (0,0) [draw] {};
\node[inner sep=2pt, circle] (0) at (0,2) [draw] {};
\node[inner sep=2pt, circle] (1) at (0,4) [draw] {};
\node[inner sep=2pt, circle] (3) at (-2,2) [draw] {};
\node[inner sep=2pt, circle] (2) at (2,2) [draw] {};

\path[style=semithick] (1) edge node[anchor=west]{\small{$1$}}(0);
\path[style=semithick] (0) edge node[anchor=east]{\small{$7$}}(6);
\path[style=semithick] (3) edge node[anchor=north]{\small{$3$}}(0);
\path[style=semithick] (0) edge node[anchor= north]{\small{$2$}}(2);

\node[anchor = south east]  at (0) {\small{$0$}};
\node[anchor = south]  at (1) {\small{$1$}};
\node[anchor = east]  at (3) {\small{$3$}};
\node[anchor = west]  at (2) {\small{$2$}};
\node[anchor = north]  at (4) {\small{$4$}};

\end{tikzpicture}
$$

\end{example}

\noindent Conversely, persistence equivalence does not imply graph equivalence.

\begin{example}

$$
\begin{tikzpicture}[scale=.75]

\node[inner sep=2pt, circle] (4) at (0,0) [draw] {};
\node[inner sep=2pt, circle] (0) at (0,2) [draw] {};
\node[inner sep=2pt, circle] (1) at (0,4) [draw] {};
\node[inner sep=2pt, circle] (5) at (-2,2) [draw] {};
\node[inner sep=2pt, circle] (2) at (2,2) [draw] {};

\path[style=semithick] (1) edge node[anchor=west]{\small{$1$}}(0);
\path[style=semithick] (0) edge node[anchor=east]{\small{$4$}}(4);
\path[style=semithick] (5) edge node[anchor=north]{\small{$5$}}(0);
\path[style=semithick] (0) edge node[anchor= north]{\small{$3$}}(2);

\node[anchor = south east]  at (0) {\small{$0$}};
\node[anchor = south]  at (1) {\small{$1$}};
\node[anchor = east]  at (5) {\small{$5$}};
\node[anchor = west]  at (2) {\small{$2$}};
\node[anchor = north]  at (4) {\small{$4$}};

\end{tikzpicture}
$$

$$
\begin{tikzpicture}[scale=.75]

\node[inner sep=2pt, circle] (5) at (0,0) [draw] {};
\node[inner sep=2pt, circle] (0) at (0,2) [draw] {};
\node[inner sep=2pt, circle] (1) at (0,4) [draw] {};
\node[inner sep=2pt, circle] (2) at (-2,2) [draw] {};
\node[inner sep=2pt, circle] (4) at (2,2) [draw] {};

\path[style=semithick] (1) edge node[anchor=west]{\small{$1$}}(0);
\path[style=semithick] (0) edge node[anchor=east]{\small{$5$}}(5);
\path[style=semithick] (2) edge node[anchor=north]{\small{$3$}}(0);
\path[style=semithick] (0) edge node[anchor= north]{\small{$4$}}(4);

\node[anchor = south east]  at (0) {\small{$0$}};
\node[anchor = south]  at (1) {\small{$1$}};
\node[anchor = north]  at (5) {\small{$5$}};
\node[anchor = east]  at (2) {\small{$2$}};
\node[anchor = west]  at (4) {\small{$4$}};

\end{tikzpicture}
$$

\end{example}

\section{Counting persistence equivalence classes}\label{Counting persistence equivalence classes}

\subsection{An upper bound}

We first prove an upper bound for the number of persistence equivalence classes for any connected graph.  In general, this upper bound is not sharp, as we illustrate in Example \ref{ex not sharp}.  However, we will see in Theorem \ref{thm counting on trees} that this upper bound is sharp on a certain class of graphs, namely, trees.  First, a lemma.

\begin{lemma}\label{lem crit even} Let $f\colon G \to [0,n]$ be a discrete Morse function with $m$ critical values on a connected graph $G$.  Then $m=1+b_1(G)+2k$ for some $k\in \ZZ$.
\end{lemma}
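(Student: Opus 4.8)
The plan is to combine the weak discrete Morse inequalities (Theorem \ref{thm weak dmi}) with the parity of the total number of critical simplices. Write $m_0$ for the number of critical vertices and $m_1$ for the number of critical edges of $f$, so that $m = m_0 + m_1$. Since $G$ is connected we have $b_0(G) = 1$, and part (ii) of Theorem \ref{thm weak dmi} gives $1 - b_1(G) = m_0 - m_1$, hence $m_0 = m_1 + 1 - b_1(G)$. Substituting this into $m = m_0 + m_1$ yields $m = 2m_1 + 1 - b_1(G)$.

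Now I need to handle the sign: the statement asserts $m = 1 + b_1(G) + 2k$, i.e. $m$ and $1 + b_1(G)$ have the same parity. From $m = 2m_1 + 1 - b_1(G)$ we get $m - (1 + b_1(G)) = 2m_1 - 2b_1(G) = 2(m_1 - b_1(G))$, which is even. Therefore $m = 1 + b_1(G) + 2k$ with $k = m_1 - b_1(G) \in \ZZ$. (By the first inequality in Theorem \ref{thm weak dmi}, $m_1 \geq b_1(G)$, so in fact $k \geq 0$, though the statement as phrased only requires $k \in \ZZ$.)

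The only real content is verifying that Theorem \ref{thm weak dmi} applies in our slightly nonstandard setting (Definition \ref{defn DMF}), but this is not an obstacle: the flatness condition only restricts which discrete Morse functions we consider, and Forman's inequalities hold for every discrete Morse function, in particular the flat ones. So the ``main obstacle'' is essentially bookkeeping — making sure the identification $b_0(G) = 1$ is legitimate (it is, since $G$ is assumed connected) and that $m_0, m_1$ are well-defined finite integers (they are, by Definition \ref{defn DMF} and the finiteness of $G$). Everything else is a one-line substitution.
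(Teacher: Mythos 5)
Your proof is correct and follows essentially the same route as the paper: both arguments rest on Theorem \ref{thm weak dmi}(ii) together with $b_0(G)=1$ for connected $G$. Your version is in fact a slight streamlining, since the direct substitution $m = 2m_1 + 1 - b_1(G)$ avoids the paper's case split on the parity of $m$ and yields the explicit value $k = m_1 - b_1(G) \geq 0$.
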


\begin{proof}Let $m=m_0+m_1$, and suppose that $m=2j+1$, as the case when $m$ is even is similar.  By the Theorem \ref{thm weak dmi} (i), $m_0\geq b_0$ and $m_1\geq b_1$ so that $m=b_0+b_1+h$.  By part (ii) of that same theorem, $b_0-b_1=m_0-m_1$.  Adding this to $2j+1=m_0+m_1$, we obtain $2j+2-b_1=2m_0$ so that $b_1=2\ell$ is even.  But if $G$ is connected, $b_0=1$ and $2j+1=m=1+ 2\ell +h$.  Hence $h$ is even.
\end{proof}

\begin{proposition}\label{prop general upper} Let $G$ be a connected graph on $n$ vertices, and let $m:=1+b_1+2k$ where $m \leq n.$ Then there are at most
$$\frac{{n-1 \choose b_1}{n-1-b_1 \choose 2}{n-1-b_1-2 \choose 2}{n-1-b_1-4 \choose 2}\ldots {n-1-b_1-2k+2 \choose 2}}{k!}$$
persistence equivalence classes of discrete Morse functions with $m$ critical values on $G$.
\end{proposition}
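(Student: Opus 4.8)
The plan is to recast the count as a count of persistence diagrams and then enumerate all diagrams of the right combinatorial shape. Since $f$ and $g$ are persistence equivalent exactly when $D_f=D_g$, the number of persistence equivalence classes of discrete Morse functions with $m$ critical values on $G$ equals the number of distinct diagrams $D_f$ that such an $f$ can produce. To obtain an \emph{upper} bound it therefore suffices to isolate a list of features common to every such $D_f$ and then count all abstract diagrams having those features; over-counting is harmless here, since realizability is the separate business of Theorem \ref{thm counting on trees}.

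First I would fix the bookkeeping. Writing $m=m_0+m_1$ and combining $m=1+b_1+2k$ with the identity $m_0-m_1=b_0-b_1=1-b_1$ of Theorem \ref{thm weak dmi}(ii) ($b_0=1$ as $G$ is connected), one gets $m_0=k+1$ critical vertices and $m_1=k+b_1$ critical edges. Next I would read off the effect of each critical simplex on the induced filtration $G_{c_0}\subseteq\cdots\subseteq G_{c_{m-1}}$ via Lemma \ref{lem compute persistence} (equivalently, the elementary behaviour of $H_0$ and $H_1$ of a filtered graph): a critical vertex enters isolated, since its value is attained only by it and hence every incident edge has strictly larger value, so it is born as a new component; a critical edge $uv$ has $f(u),f(v)<f(uv)$, so both endpoints are already present, and adding it either merges two components (a death in $H_0$) or closes a loop (a birth in $H_1$, which then never dies, there being no $2$-simplices). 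Since $G_{c_0}$ is a single vertex while $G_{c_{m-1}}=G$ has $b_0=1$ and $b_1(G)=b_1$, exactly $k$ of the $m_1$ critical edges merge and exactly $b_1$ close loops; and the component of the global minimum, being oldest, never dies. So every $D_f$ has exactly: one $H_0$ point at infinity, with birth $0$; exactly $k$ finite $H_0$ points; and exactly $b_1$ $H_1$ points at infinity, each born after time $0$ (as $H_1(G_{c_0})=0$). Finally, the $m$ critical values are distinct (Remark \ref{rem dmf}: at most one event per time), and — by the integrality and codomain conventions of Definition \ref{defn DMF} together with $m\le n$ — they form an $m$-element subset of $\{0,1,\dots,n-1\}$ containing $0$.

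With that structure in hand, the count is routine. The point $(0,\infty)$ is forced, so a diagram of this shape is determined by a choice of a subset $S\subseteq\{1,\dots,n-1\}$ of size $b_1$ (the $H_1$ birth times) together with an unordered family of $k$ pairwise disjoint $2$-element subsets of $\{1,\dots,n-1\}\setminus S$ (the endpoint pairs of the finite $H_0$ bars; given such a pair $\{a,b\}$ with $a<b$ the only admissible bar is $[a,b)$). There are $\binom{n-1}{b_1}$ choices of $S$, and then $\tfrac{1}{k!}\binom{n-1-b_1}{2}\binom{n-1-b_1-2}{2}\cdots\binom{n-1-b_1-2k+2}{2}$ ways to choose the $k$ unordered disjoint pairs (pick them successively, then divide by $k!$). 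The product is the claimed bound; the hypothesis $m\le n$ is precisely what guarantees $n-1-b_1-2(k-1)\ge 2$ so that no binomial coefficient is negative, and for $k=0$ the empty product leaves just $\binom{n-1}{b_1}$.

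The last paragraph is mechanical; the step I expect to be the real obstacle is the structural analysis — rigorously pinning down that every such $D_f$ has exactly one $H_0$ point at infinity (necessarily at $0$), exactly $k$ finite $H_0$ points, and exactly $b_1$ $H_1$ points at infinity, with all coordinates distinct integers in $\{0,\dots,n-1\}$ and the $H_1$ births positive. That is where the discrete Morse inequalities, Lemma \ref{lem crit even}, Lemma \ref{lem compute persistence}, and the non-standard conventions of Definition \ref{defn DMF}/Remark \ref{rem dmf} all have to be deployed together.
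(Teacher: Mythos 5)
Your proposal takes essentially the same route as the paper's proof: both count all possible barcodes by forcing the infinite $H_0$ bar born at $0$, choosing the $b_1$ birth times of the never-dying cycle classes from the remaining $n-1$ integer times (giving ${n-1 \choose b_1}$), then successively choosing the $k$ disjoint unordered birth--death pairs and dividing by $k!$. Your extra structural bookkeeping ($m_0=k+1$, $m_1=k+b_1$, exactly $k$ merging critical edges and $b_1$ loop-closing ones, all critical values distinct integers) simply makes explicit what the paper's proof leaves implicit, so the argument is correct and matches the paper's approach.
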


\begin{proof} We compute the upper bound by counting all possible barcodes that could be obtained.  The minimum value (in this case $0$) of every discrete Morse function corresponds to a critical vertex, which in turn induces a birth at $t=0$, leaving $n-1$ other times for births and deaths. Since each independent cycle is born and never dies, we have ${n-1 \choose b_1}$ choices of times to birth cycles.  The other $2k$ critical values correspond to persistence pairs.  For the first persistence pair, we choose a birth and death time from the remaining $n-1-b_1$ options, which yields ${n-1-b_1 \choose 2}$ options. There are then ${n-1-b_1-2 \choose 2}$ options for the next persistence pair. Continuing in this manner, we obtain
$$
{n-1 \choose b_1}{n-1-b_1 \choose 2}{n-1-b_1-2 \choose 2}{n-1-b_1-4 \choose 2}\ldots {n-1-b_1-2k+2 \choose 2}.
$$
However, the order in which we choose birth death pairs does not matter, so we must divide by the number of permutations on the number of persistence pairs chosen i.e., divide by $k!$.  Thus the result.
\end{proof}

\noindent Unfortunately this result is not sharp for all graphs.

\begin{example}\label{ex not sharp} Let $C_6$ be a cycle of length $6$, and consider the barcode

$$
\begin{tikzpicture}[scale=.5][thick]
    \draw (0,0) -- (12,0);
    \draw (0,0) -- (0,10);
    \draw[thin, dashed] (2,0) -- (2,10);
    \draw[thin, dashed] (4,0) -- (4,10);
    \draw[thin, dashed] (6,0) -- (6,10);
    \draw[thin, dashed] (8,0) -- (8,10);
    \draw[thin, dashed] (10,0) -- (10,10);
    \draw[thin, dashed] (12,0) -- (12,10);

    \node at (0,-.5) {$0$};
    \node at (2,-.5) {$1$};
    \node at (4,-.5) {$2$};
    \node at (6,-.5) {$3$};
    \node at (8,-.5) {$4$};
    \node at (10,-.5) {$5$};
    \node at (12,-.5) {$6$};

    \node[black] at (-1,3) {$b_0$};

    \node[black] at (-1,9) {$b_1$};

    \draw[black, ultra thick, ->] (0,1) -- (12,1);
    \draw[black, ultra thick, ->] (2,9) -- (12,9);
    \draw[black, ultra thick] (4,3) -- (8,3);
    \draw[black, ultra thick] (6,5) -- (12,5);

\end{tikzpicture}
$$
This barcode is certainly counted as a possibility in Proposition \ref{prop general upper}.  However, it cannot be obtained on $C_6$ since in order to have the cycle born at time $1$, the entire graph must be built and hence, it is impossible to have any more births and deaths after the cycle is born. Thus, there is only one barcode with the cycle born at $t=1$ on a cycle of any length.
\end{example}

\noindent In the special case of trees, we obtain

\begin{corollary}\label{cor tree count} Let $T$ be a tree on $n$ simplices, $m=2k+1$ an integer, $1\leq m \leq n$. Then there are at most
$$
\frac{(n-1)(n-2)(n-3)\ldots(n-2k)}{2^{k}k!}
$$
persistence equivalence classes of discrete Morse functions on $T$ with $m$ critical values.
\end{corollary}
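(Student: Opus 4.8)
The plan is to specialize Proposition~\ref{prop general upper} to the case $b_1(T)=0$, which holds for every tree by Theorem~\ref{thm char of trees}(d), and then simplify the resulting expression. Since $b_1=0$, the relation $m=1+b_1+2k$ becomes $m=2k+1$, matching the hypothesis of the corollary, and $m\leq n$ is assumed, so Proposition~\ref{prop general upper} applies and its bound reads
$$
\frac{{n-1 \choose 0}{n-1 \choose 2}{n-3 \choose 2}{n-5 \choose 2}\cdots{n-2k+1 \choose 2}}{k!}.
$$

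First I would observe that ${n-1 \choose 0}=1$, so that factor drops out, leaving a product of $k$ binomial coefficients of the form ${n-2i+1 \choose 2}=\tfrac{(n-2i+1)(n-2i)}{2}$ for $i=1,\dots,k$. Multiplying these together, each of the $k$ denominators contributes a factor $\tfrac12$, for a total of $2^{-k}$, while the numerators assemble into the descending product $(n-1)(n-2)(n-3)(n-4)\cdots(n-2k+1)(n-2k)$ of $2k$ consecutive integers starting at $n-1$. Dividing by the remaining $k!$ then yields
$$
\frac{(n-1)(n-2)(n-3)\cdots(n-2k)}{2^{k}k!},
$$
which is precisely the claimed bound.

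The only step needing a moment's attention is the bookkeeping of the last binomial coefficient: in Proposition~\ref{prop general upper} the final factor is ${n-1-b_1-2k+2 \choose 2}$, which for $b_1=0$ equals ${n-2k+1 \choose 2}$ and so contributes the numerator $(n-2k+1)(n-2k)$; hence the descending product does terminate exactly at $n-2k$, giving $2k$ factors in total, consistent with the $2^{k}$ produced by the $k$ denominators. Everything else is routine algebraic simplification, so I expect no genuine obstacle here — the substantive content is entirely contained in Proposition~\ref{prop general upper}, which may be assumed.
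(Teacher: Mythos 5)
Your proposal is correct and follows essentially the same route as the paper: substitute $b_1(T)=0$ into Proposition~\ref{prop general upper}, expand each ${n-\ell \choose 2}$ as $\tfrac{(n-\ell)(n-\ell-1)}{2}$, and collect the $2^{-k}$ factors to obtain the stated bound. Your check that the final factor ${n-2k+1 \choose 2}$ makes the descending product end at $n-2k$ is exactly the bookkeeping the paper's simplification relies on.
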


\begin{proof} By Theorem \ref{thm char of trees}, $b_1(T)=0$ so that Proposition \ref{prop general upper} becomes
$$\frac{{n-1 \choose 2}{n-3 \choose 2}...{n-2k+1 \choose 2}}{k!}.$$

\noindent Observe that ${n-\ell \choose 2} = \frac{(n-\ell)(n-\ell-1)}{2}$.  Hence, replace each factor in the product and simplify to obtain

$$
\frac{(n-1)(n-2)(n-3)\ldots (n-m+2)(n-2k)}{2^{k}k!}.
$$
\end{proof}

\noindent As we will see in Theorem \ref{thm counting on trees}, this upper bound is attained. The next section is devoted to constructing such discrete Morse functions.

\subsection{Counting on trees}

We begin by fixing some notation. Let $T$ be a tree on $n$ simplices.  We say that a persistence diagram $D=\{(c_i,d_i)\}$ is \textbf{consistent with $T$} if $(0,\infty)\in D$ and for all $c_i,d_j$ appearing in ordered pairs in $D-\{(0,\infty)\}$, we have
\begin{enumerate}
    \item $c_i,d_j\in \ZZ$
    \item $1\leq c_i,d_j\leq n$
    \item $c_i<d_i$, and $c_i<c_j$ for all $i<j$
    \item All $c_i,d_j$ are distinct.
\end{enumerate}

\noindent Given a tree $T$ and a persistence diagram $D$ consistent with $T$, we will show that there exists a discrete Morse function on $T$ such that $D_f=D$.

\begin{definition}\label{def S[i]} Let $f \colon T \to [0,n]$ be a discrete Morse function. For a fixed level subcomplex $T_{c_j}$, let $S[v]$ denote the tree of $T_{c_j}$ whose minimum critical vertex is $v$.
\end{definition}

\noindent The following Lemma allows us to compute the persistence diagram using the critical values of the discrete Morse function.

\begin{lemma}\label{lem compute persistence}
Let $f\colon T \to [0,n]$ be a discrete Morse function on a tree. Then $v$ is born at $c_i$  if and only if $f(v)=c_i$ is critical.\\

\noindent Furthermore, $v$ dies at $c_{j+1}$ if and only if there exists a critical edge $e$ with $f(e)=c_{j+1}$ where $e$ joins trees $S[v],S[u]$ in $T_{c_{j+1}}$ with $f(u)<f(v)$.
\end{lemma}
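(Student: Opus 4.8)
The plan is to prove the two statements separately, in each direction, working directly with the definition of birth and death applied to the filtration $\{T_{c_i}\}$ induced by $f$. Throughout I will use freely that $T$ is a tree, so $H_1(T_{c_i})=0$ for every $i$ (every level subcomplex of a tree is a forest), and hence only $H_0$ is relevant; the persistence information is entirely about connected components. I will also use the fact, recorded after Definition \ref{defn DMF}, that $f$ is flat, so each $T_{c_i}$ is an honest subgraph, and that the regular simplices are added in vertex–edge pairs which do not change the number of components.

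For the first statement, I would argue as follows. If $f(v)=c_i$ is critical, then $v$ is a critical $0$-simplex, so at the level where $c_i$ is reached the vertex $v$ is added to $T_{c_{i-1}}$ together with possibly some regular pairs of value $c_i$; but a critical vertex is by definition incident, at the moment it appears, to no edge already present of smaller or equal value that would attach it (any edge $e$ with $v<e$ has $f(e)\ge f(v)=c_i$, and if equality held then $\{v,e\}$ would be the regular pair, contradicting $v$ critical). Hence $v$ is an isolated vertex in $T_{c_i}$ at the moment of its appearance, so the component $[v]$ is not in the image of $H_0(T_{c_{i-1}})\to H_0(T_{c_i})$: $v$ is born at $c_i$. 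Conversely, suppose $v$ is born at $c_i$ as a generator of $H_0$. A new component of a subgraph of $T$ can appear only when a new vertex is added with no edge of value $\le c_i$ joining it to the existing complex; regular pairs never create components. The only such vertices added at value $c_i$ are critical vertices, so $f(v)=c_i$ must be critical. One small point to nail down: distinct critical vertices have distinct critical values (Definition \ref{defn DMF} forces critical values to be integers and the $1$-$1$ condition on fibers keeps them distinct, reinforced by Remark \ref{rem dmf} that at most one event happens per time), so the correspondence ``component $\leftrightarrow$ critical vertex'' is unambiguous.

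For the second statement I would use the elementary merge rule for $H_0$ of a growing forest: when an edge $e$ is added at time $c_{j+1}$, it either lies inside one existing component (impossible here, since $T$ is a tree and such an edge would create a cycle) or it joins two distinct components; in the latter case exactly one $H_0$ class dies, and the convention for persistent homology of a filtration (the elder rule) is that the class that dies is the younger one, i.e. the component born later. By the first part of the lemma, the two components meeting along $e$ are $S[v]$ and $S[u]$ where $v,u$ are their minimal critical vertices, and ``born later'' means $f(v)>f(u)$; so the class $[v]$ dies at $c_{j+1}$ exactly when such a critical edge $e$ exists with $f(u)<f(v)$. For the converse direction, I would note that the only simplices whose addition can kill an $H_0$ class are edges joining two components, regular edges come paired with a regular vertex forming a leaf and so attach to a single component without merging anything, hence the edge causing the death of $[v]$ at $c_{j+1}$ must be critical of value $c_{j+1}$, and by minimality $v$ is the youngest among the components it bounds, forcing $f(u)<f(v)$ for the other endpoint-component's minimum $u$.

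The main obstacle, and the step I would be most careful about, is the bookkeeping that a regular vertex–edge pair never creates or destroys a component: a regular pair $\{w,e\}$ in a discrete Morse function on a graph is necessarily a free pair (leaf), so when it is added, $e$ attaches the new vertex $w$ to exactly one vertex already present (of value $\le c_j$), leaving the number of components unchanged. Verifying this in the present setup — that in a tree, with the flatness and monotonicity conditions of Definition \ref{defn DMF}, every regular pair added at value $c_j$ really does attach to the already-built subcomplex as a leaf and does not, say, reattach a whole dangling subtree — is the crux; once it is in hand, both biconditionals reduce to the standard ``elder rule'' analysis of $H_0$ for a filtration built by adding one isolated vertex (birth), one connecting edge (death), or one leaf pair (no change) at each critical time. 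I would isolate this regular-pair claim as the key sublemma and spend most of the written proof on it, then assemble the four implications above in a few lines each.
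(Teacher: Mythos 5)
Your proposal is correct and follows essentially the same route as the paper's proof: both arguments analyze how the level subcomplexes change, using that regular (flat) pairs attach as leaves of the current subcomplex and leave $H_0$ unchanged, that a critical vertex appears as an isolated vertex and hence is born, and that a critical edge merges two trees with the class of the tree having the larger minimal critical value dying, exactly as the paper's definition of death (your ``elder rule'') dictates. The only difference is organizational: you isolate the leaf-attachment fact as an explicit sublemma and invoke the elder rule, whereas the paper verifies the definitional conditions directly through the maps $f_0^{i,j}$ with a short induction for the regular case; the underlying content is the same.
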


\noindent In other words, when two trees are joined by a critical edge, the vertex that dies is the one that belongs to the tree with larger minimum value.

\begin{proof}
Suppose vertex $v$ is born at $T_{c_i}$.  Then $[v]\not \in \mathrm{im}(f_0^{c_{i-1},c_i})$.  We claim that $v$ is critical. If not, then $v$ is regular and hence part of a free pair, $v<e$.  Write $e=uv$, and suppose that $u\in T_{c_{i-1}}.$ Then $f_0^{c_{i-1},c_i}([u])=[v]$, a contradiction.  If $u \not \in T_{c_{i-1}}$, then $u$ is part of a free pair, and the result follows inductively.  For the converse, we clearly have that $[v]\not \in \mathrm{im}(f_0^{c_{i-1},c_i})$ since $v\not \in T_{c_{i-1}}$ and there does not exist a path from $v$ to any other vertex in $T_{c_i}$.\\

\noindent Suppose that $v$ dies at $T_{c_{j+1}}$ and write $f(e)=c_{j+1}$. For the same reason as above, $e$ must be critical.  Now $e$ connects two trees $S(v)$ and $S(u)$ which were disconnected in $T_{c_{j}}$. By definition of $v$ dying at $c_{j+1}$, $f_0^{c_i,c_{j+1}}(v)$ is the  class $[u]$ that existed at  $T_{c_{i-1}}$.  Since $f(\sigma)\leq c_{i-1}$ for every $\sigma \in T_{c_{i-1}}$, $f(u)\leq c_{i-1}<f(v)$. For the converse, since there is now a path between $u$ and $v$, $[u]=[v]$ in $H_0^{i,j+1}$. Furthermore, because $f(u)<f(v)$, $f^{i,j}_0[v]\not \in \mathrm{im}(f^{i-1,j}_0)$.  Thus $[v]$ dies at time $j+1$.
\end{proof}

\noindent Our construction will break a tree down into a forest and build a discrete Morse function with a single critical value on each tree of the forest.  The following lemma allows us to accomplish this. For the purposes of this lemma, we allow a discrete Morse function to take on any non-negative value for its minimum.

\begin{lemma}\label{lem main lem}
Given a tree $T$ and a vertex $v \in T$, a single value $f(v)$, and some number $n_0$ satisfying $n_0 > f(v)\geq0$, we can extend $f \colon T \rightarrow [f(v),n_0]$ such that $f$ is a discrete Morse function with unique critical simplex $v$ and $\max(f) < n_0$.
\end{lemma}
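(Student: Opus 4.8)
The plan is to build $f$ explicitly by rooting $T$ at $v$ and defining $f$ so that every vertex $w\neq v$ is paired with the unique edge joining $w$ to its parent. This makes $v$ the unique critical simplex: $v$ is critical because it is not matched with any edge, and every other simplex belongs to a flat regular pair. The only remaining task is to choose the numerical values so that $f$ is monotone, flat on regular pairs, strictly separates distinct pairs (the ``at most $2$-to-$1$'' condition with the order constraint), has minimum $f(v)$, and has maximum strictly below $n_0$.

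First I would set up the rooting: pick $v$ as the root, so each vertex $w\neq v$ has a well-defined parent $p(w)$ and a well-defined ``height'' $h(w)$ equal to the number of edges on the unique path from $v$ to $w$ (here $h(v)=0$). For $w \neq v$, let $e_w$ denote the edge $w\,p(w)$; the map $w \mapsto e_w$ is a bijection from $V(T)\setminus\{v\}$ onto $E(T)$, since $T$ is a tree (Theorem~\ref{thm char of trees}). The pairs $\{w,e_w\}$ are exactly the candidate regular pairs. Second, I would define the values: enumerate the vertices $w\neq v$ as $w_1,w_2,\dots,w_{k}$ in an order that refines the height order, i.e.\ $h(w_i)\le h(w_j)$ whenever $i<j$ (a breadth-first ordering works). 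Then pick a strictly increasing sequence of reals $f(v) < a_1 < a_2 < \cdots < a_k < n_0$ and declare $f(w_i) = f(e_{w_i}) = a_i$. Finally I would verify monotonicity: if $u < \sigma$ for a vertex $u$ and edge $\sigma$, then either $\sigma = e_u$, in which case $f(u) = f(\sigma)$, or $\sigma = e_w$ where $u = p(w)$, in which case $h(u) = h(w)-1 < h(w)$ forces $u$ to appear earlier in the enumeration than $w$, so $f(u) < f(w) = f(\sigma)$; and trivially $f(v) \le f(\sigma)$ for the two edges incident to $v$. Hence $f$ is monotone, flat on each pair $\{w_i,e_{w_i}\}$, and $2$-to-$1$ exactly on the flat pairs (with the required incidence order), while $1$-to-$1$ only at the value $f(v)$. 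So $v$ is the unique critical simplex, $\min f = f(v)$, and $\max f = a_k < n_0$, as required.

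The main obstacle — really the only subtle point — is making sure the chosen enumeration is simultaneously compatible with the parent relation in the direction monotonicity needs: we need the parent $p(w)$ (the vertex ``closer to the root'') to receive a value no larger than the one attached to $e_w$, while $w$ itself receives that same value. Using a height-refining (breadth-first) order is exactly what guarantees $p(w)$ precedes $w$, so this is handled cleanly; one just has to be careful that the paired edge $e_w$ inherits $w$'s value and not $p(w)$'s. A secondary bookkeeping point is that Definition~\ref{defn DMF} (read without the $\min(f)=0$ and $\mathbb{N}$-valued normalizations, as the lemma explicitly permits) requires only flatness of regular pairs and the order condition on coincident values, both of which hold by construction; there is no constraint forcing critical values to be integers here, so any strictly increasing real sequence squeezed into $(f(v),n_0)$ suffices.
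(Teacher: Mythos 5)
Your construction is correct. It shares the paper's basic strategy---root $T$ at $v$, let the function grow away from the root, and flat-pair every other vertex with the edge toward its parent---but the implementation is genuinely different. The paper labels each vertex $u\neq v$ by its distance from $v$ via the recursively defined sequence $\alpha_0=f(v)$, $\alpha_i=\frac{n_0+\alpha_{i-1}}{2}$ (so all values are squeezed below $n_0$), and gives each edge the maximum of its endpoint values; the vertex/parent-edge pairing is then implicit in the verification that every $u\neq v$ has exactly one incident edge of equal value. You instead make the pairing explicit and assign a strictly increasing sequence $f(v)<a_1<\cdots<a_k<n_0$ along a breadth-first enumeration. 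Your variant buys something concrete: the resulting function is injective away from the flat pairs, so it satisfies Definition~\ref{defn DMF} verbatim (at most $2$--$1$, with coincident values only on incident simplices), whereas the paper's distance-based labeling gives the same value to all vertices equidistant from $v$ (and to their parent edges)---on a star, for instance, one value is attained by several non-incident simplices---so it only meets the usual Forman-style local condition rather than the paper's stricter global one. Conversely, the paper's midpoint recursion is just one convenient way to fit infinitely divisible values under $n_0$; your ``any strictly increasing sequence in $(f(v),n_0)$'' is equally valid, and your remark about the relaxed $\min(f)$ and integrality normalizations matches the paper's stated convention for this lemma. (One cosmetic slip: $v$ need not have exactly two incident edges, but your argument there does not use that.)
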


\begin{proof}
If $T$ is a tree with a single vertex, it follows that $f(v) < n_0$.  Otherwise, let $T$ be a tree with $\ell$ vertices and let $n_0$ be as above. Define  $\alpha_i$ recursively by $\alpha_{0}:= f(v)$ and  $\alpha_{i}:= \frac{n_0+\alpha_{i-1}}{2}$ for $1\leq i \leq \ell$.  It is then clear that $\alpha_0<\alpha_{1}<\ldots <\alpha_{\ell}<n_0$.\\

\noindent Now we wish to extend $f$ to all of $T$. Define $f$ by the following:
\begin{enumerate}
    \item[i.] for any vertex $u \neq v$, label $f(u):=\alpha_{d(v, u)} +f(v)$
\item[ii.] for any edge $e=uw$, label $f(e) := \max\{f(u), f(w)\}$.
\end{enumerate}

\noindent where $d(u,v)$ is the distance between vertices $u$ and $v$; that is, the minimum number of edges over all paths between $u$ and $v$. Note that since $T$ is a tree with $\ell$ vertices, $d(u,v)\leq \ell$.  We need to verify that $f$ is a discrete Morse function with unique critical simplex $v$ and $\max(f) < n_0$. Clearly $v$ is critical since for any edge $e=uv$, we have $f(e)=\max\{f(u), f(v)\} = \alpha_{d(v, u)} +f(v) > f(v)$.\\

\noindent Next, we show that any vertex $u\neq v$ is regular. Let $v_1, \ldots, v_j$ be the neighbors of $u$.  We claim that $f(u)<f(v_i)$ for all $i$ other than exactly one value.
Since $T$ is a tree, there is a unique path from $v$ to $u$. Moreover, this path must pass through exactly one of the $v_i$. Now the unique path from $v$ to any other neighbor $v_k$ of $u$,  must go through $u$. Otherwise, we obtain a cycle. It follows that $d(v,v_i)<d(v,u)<d(v,v_k)$ for some $i$ with $1\leq i\leq j$ and for all $k$ with $1\leq k \leq j$, $k\neq i$, whence $f(v_i)<f(u)<f(v_k)$. Hence $f(u)<f(v_i)$ for all but exactly one value so that all vertices $u \neq v$ are regular.
\\

\noindent Finally, for any edge $e=uw$ in $T$, $f(e) = \max\{f(u), f(w)\}$. Hence we need to show that $f(u)\neq f(w)$. By contradiction, assume that $f(u)=f(w)$. Then $d(u,v) = d(u,w)$, which implies that the path from $v$ to $u$ and the path from $v$ to $w$ along with edge $e$, is a cycle, a contradiction. We conclude that $f$ is a discrete Morse function with the desired properties.
\end{proof}

\noindent The following lemma is clear.

\begin{lemma}\label{lem tree edge}
Let $T$ be a tree, $E:=\{e_1, e_2, \ldots, e_k\}$ a set of edges of $T$, and  $F:=T-E$ the resulting forest. Let $\widetilde{T}$ be any tree in $F$. Then there exists an edge $e\in E$  with one endpoint in $\widetilde{T}$, and the other endpoint in a different tree of the forest.
\end{lemma}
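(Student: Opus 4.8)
The plan is to prove this by a connectivity/counting argument on the forest $F = T - E$. First I would set up notation: suppose $F$ has trees $\widetilde{T} = T_1, T_2, \ldots, T_p$. Since $T$ is connected and we have removed exactly $k$ edges, deleting edges can only increase the number of connected components by at most one per edge, and in fact for a tree each edge removal increases the component count by exactly one (by characterization (e) of Theorem \ref{thm char of trees}), so $p = k+1$. In particular $p \geq 2$, so there is at least one other tree in the forest besides $\widetilde{T}$.

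Next, the key step: consider the set of edges of $E$ incident to at least one vertex of $\widetilde{T}$. I claim this set is non-empty and moreover at least one such edge has its other endpoint outside $\widetilde{T}$. To see non-emptiness, note that since $T$ is connected and $p \geq 2$, there must be a path in $T$ from a vertex of $\widetilde{T}$ to a vertex of some $T_j$ with $j \neq 1$; the first edge along this path that leaves $\widetilde{T}$ cannot be an edge of $F$ (edges of $F$ stay within a single tree of the forest), hence it lies in $E$, and by construction it has one endpoint in $\widetilde{T}$ and the other in a different tree. Alternatively, and perhaps more cleanly, I would argue by contradiction: if every edge of $E$ with an endpoint in $\widetilde{T}$ had \emph{both} endpoints in $\widetilde{T}$ (or no edge of $E$ met $\widetilde{T}$ at all), then no edge of $T$ would join $\widetilde{T}$ to $T \setminus V(\widetilde{T})$, contradicting the connectedness of $T$ (since $\widetilde{T} \neq T$, as $p \geq 2$).

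I would then just need to observe that an edge $e \in E$ with one endpoint $x$ in $\widetilde{T}$ and the other endpoint $y \notin V(\widetilde{T})$ automatically has $y$ lying in some tree $T_j$ of $F$ with $T_j \neq \widetilde{T}$, since the trees of $F$ partition $V(T)$. This completes the argument.

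The main obstacle, such as it is, is simply being careful that ``the other endpoint in a different tree of the forest'' is the correct reading of the statement — i.e., that we genuinely need the edge to leave $\widetilde{T}$, not merely to be incident to it — and ensuring the degenerate cases are handled (e.g., $\widetilde{T}$ a single vertex, or $E$ containing edges with both endpoints in $\widetilde{T}$). None of these is serious; the whole proof rests on the single fact that $T$ is connected while $\widetilde{T}$ is a proper subtree, so some edge of $T$ — necessarily one of the deleted edges — must cross between $\widetilde{T}$ and its complement.
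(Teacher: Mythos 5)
Your argument is correct and complete: since the components of $F$ partition $V(T)$ and $\widetilde{T}$ is a proper component once at least one edge is removed, connectedness of $T$ forces some edge crossing from $V(\widetilde{T})$ to its complement, and such an edge cannot lie in $F$, hence lies in $E$. The paper itself offers no proof (it simply declares the lemma clear), and your connectivity argument is precisely the justification being taken for granted; the only pedantic caveat is that the statement implicitly assumes $E\neq\emptyset$ (equivalently $k\geq 1$), which is the case whenever the lemma is invoked in Theorem \ref{thm counting on trees}, and which your step $p=k+1\geq 2$ quietly uses.
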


\noindent We now come to our main result.

\begin{theorem}\label{thm counting on trees}
Let $T$ be a tree, $D:=\{(c_i,d_i)\}$ a persistence diagram consistent with $T$.  Then there exists a discrete Morse function $f\colon T\to [0,n]$ such that $D_f=D$.
\end{theorem}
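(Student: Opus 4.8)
The plan is to build $f$ by a greedy "decomposition into single-critical-value pieces" strategy, using Lemma \ref{lem compute persistence} as the bookkeeping device and Lemma \ref{lem main lem} as the local construction tool. Write the persistence diagram as $D=\{(0,\infty)\}\cup\{(c_1,d_1),\ldots,(c_k,d_k)\}$ with $0<c_1<c_2<\cdots<c_k$ and all $c_i,d_i$ distinct integers in $\{1,\ldots,n\}$, and set $c_0=0$; by consistency the birth values are $c_0,c_1,\ldots,c_k$ and the death values are the $d_i$. The total number of critical simplices is $m=2k+1$, so by Lemma \ref{lem crit even} this is consistent with $b_1(T)=0$.

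First I would choose which simplices of $T$ will be critical and with which values. Pick $k+1$ distinct vertices $v_0,v_1,\ldots,v_k$ of $T$ that are to be the critical vertices (there are at least $k+1\le (n+1)/2$ vertices since $m\le n$ forces $k\le (n-1)/2$ and a tree on $n$ simplices has $\lceil (n+1)/2\rceil$ vertices); these will get the birth values $c_0<c_1<\cdots<c_k$. Next, process the intended persistence pairs in order of increasing birth time $c_i$ and assign critical edges: when $(c_i,d_i)$ is to be realized, the vertex $v_i$ (carrying value $c_i$) must die at time $d_i$ by getting connected, via a critical edge of value $d_i$, to the tree whose minimum critical value is smaller than $c_i$. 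To make this precise I would maintain a growing forest of "blocks," each block being a subtree of $T$ that has been assigned one critical vertex (its minimum) and on which $f$ will later be filled in by Lemma \ref{lem main lem}; the critical edges are exactly the edges of $T$ joining these blocks. Lemma \ref{lem tree edge} is what guarantees that at each stage there is an available edge of $T$ connecting the relevant pieces, so the chosen critical edges can actually be realized inside $T$: since $T$ with the chosen critical edges removed is the disjoint union of the blocks, each block is joined to the rest by at least one critical edge, and in fact the critical edges form a tree on the set of blocks, so I can prescribe which block merges with which at each death time.

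The assignment of death times to edges must respect two things: (a) the "smaller minimum survives" rule of Lemma \ref{lem compute persistence}, so the edge realizing $(c_i,d_i)$ must, at the moment $T_{d_i}$ is formed, connect $v_i$'s current block to a block whose minimum is $<c_i$; and (b) monotonicity $f(v)\le f(e)$ for $v<e$, which forces each critical edge value $d_i$ to exceed the $f$-values of both its endpoints — but those endpoint values are governed by Lemma \ref{lem main lem}, which lets me squeeze all regular values of a block into an arbitrarily short interval just above the block's critical value, so (b) is automatically arrangeable. Concretely, once the skeleton (critical vertices with values $c_i$, critical edges with values $d_i$, and the block decomposition of $T$) is fixed, I apply Lemma \ref{lem main lem} to each block $B$ with minimum critical vertex $v_i$: extend $f|_B$ to a discrete Morse function on $B$ with unique critical simplex $v_i$, with $f(v_i)=c_i$, and with $\max(f|_B)<n_0$ where $n_0$ is chosen to be the smallest death value $d_j$ assigned to an edge incident to $B$ (or simply any value $\le n$ larger than $c_i$ but smaller than all relevant edge values). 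This yields a well-defined monotone $f\colon T\to[0,n]$ whose critical simplices are exactly the chosen vertices and edges with exactly the chosen values, so by Lemma \ref{lem compute persistence}, $D_f=D$.

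The main obstacle — and the step that needs the most care — is step (a): arranging the block-merging order so that whenever edge $e_i$ of value $d_i$ is added, the two blocks it joins have minima $c_i$ and something strictly smaller than $c_i$, simultaneously for all $i$, while also ensuring the critical edges actually span $T$ in the pattern required (a tree on the blocks). I expect to handle this by an inductive / recursive construction on $k$: realize the pair with the largest birth value $c_k$ by choosing its partner block to be whichever already-built block contains $v_0$ (whose minimum $0$ is certainly smaller), peel that pair off, and recurse on the diagram with $(c_k,d_k)$ removed and $v_k$'s block absorbed. Lemma \ref{lem tree edge} supplies, at each recursive step, an edge of $T$ between the block being attached and the rest, and a counting check ($n$ simplices, $k+1$ critical vertices, $k$ critical edges, the remaining $n-1-2k$ simplices filled as regular pairs by Lemma \ref{lem main lem}) confirms everything fits inside $T$. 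The verification that the resulting $f$ is genuinely a discrete Morse function in the sense of Definition \ref{defn DMF} — flatness of regular pairs, integrality of critical values, $\min f=0$, at most one event per time — then follows directly from the properties guaranteed by Lemma \ref{lem main lem} together with the distinctness and integrality hypotheses built into "consistent with $T$."
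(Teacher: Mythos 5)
Your toolkit is the right one (block decomposition by deleting $k$ edges, one critical vertex per block, Lemma \ref{lem main lem} to squeeze the regular values of each block just above its critical vertex, Lemma \ref{lem tree edge} to find connecting edges, Lemma \ref{lem compute persistence} to read off the diagram), but the step you yourself flag as the main obstacle is not correctly resolved, and it is exactly where your scheme differs from a working one. Two specific problems. First, you claim that since the critical edges form a tree on the blocks, you ``can prescribe which block merges with which at each death time.'' You cannot: once the deleted edge set is fixed, the block-adjacency tree is dictated by $T$, and Lemma \ref{lem tree edge} only supplies \emph{some} edge from a given block to \emph{some} other block, not to a block of your choosing (in particular not necessarily to the block containing $v_0$). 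Second, you fix the critical vertices $v_0,\ldots,v_k$ (hence, implicitly, the blocks and which block receives which birth value) in advance and then process pairs in decreasing birth order. This can fail outright. Take $T$ the path $u_0u_1u_2u_3$ ($n=7$) and $D=\{(0,\infty),(1,2),(3,4)\}$, and suppose the initial choice is $v_0=u_0$, $v_1=u_2$ (birth $1$), $v_2=u_1$ (birth $3$), with blocks $\{u_0\},\{u_1\},\{u_2,u_3\}$. The only edge leaving $v_1$'s block goes to $v_2$'s block, so the edge realizing the death $2$ would have an endpoint with $f$-value at least $3$, violating monotonicity; and even ignoring that, Lemma \ref{lem compute persistence} would kill $v_2$ (the larger minimum) rather than $v_1$, so the pairing comes out wrong. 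Nothing in your proposal rules out such initial choices, and your decreasing-order recursion has the same defect in general: a later-processed (smaller-birth) block may be forced to attach to an already-absorbed larger-birth block whose values exceed the required death label, or whose component at that level has minimum larger than the newcomer's birth.

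The paper's proof avoids all of this by \emph{not} fixing the critical vertices or the birth-to-block assignment in advance: it deletes any $k$ edges, then processes the births in \emph{increasing} order, at each stage applying Lemma \ref{lem tree edge} to the still-unlabeled deleted edges so that the next block is chosen adjacent to the already-labeled connected region, its critical vertex gets the next birth value $c_i$, its regular values are squeezed below the next critical value $a_{j+1}$, and the connecting edge gets $d_i$. Because the labeled region always contains $v_0$ and every previously placed critical vertex has birth $<c_i$, the ``smaller minimum survives'' condition and monotonicity of the new critical edge hold automatically, with no global matching problem to solve. So the gap in your argument is precisely the adaptive, increasing-order selection of blocks; as written, your resolution of obstacle (a) does not go through.
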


\begin{proof} Let $T$ be a tree and $D:=\{(c_i,d_i)\}$ a persistence diagram consistent with $T$.  Order and label the values in each persistence pair of the persistence diagram  as $0=a_0 < a_1 <\ldots <a_{m-1}<a_m=\infty$. Remove any $\frac{m-1}{2}$ edges $E$ from $T$ and write $F:=T-E$. We will label $T$ with a discrete Morse function by inducing on $c_i$, the birth times, and continually extending the function $f$ on subgraphs of $T$ until it is defined on all of $T$.\\

\noindent For $c_0=0$, pick any tree $T_0$ in $F$ and any vertex $v_0$ of $T_0$. Define $f(v_0):=0$.  Applying Lemma \ref{lem main lem} on $T_0$ with $n_0=a_1$, we obtain a discrete Morse function on $T_0$ with $v_0$ the unique critical vertex. In the case where $a_1=\infty$, pick any finite $n>0$ to obtain a labeling of the entire tree $T$.\\

\noindent In the case where $c_1=a_1\neq \infty$, let $(c_1,d_1)\in D$. Apply Lemma \ref{lem tree edge} on $T$ and $E$ with $\widetilde{T}=T_0$ to obtain an edge $e_1$ joining $T_0$ and an unlabeled tree $T_1$. Pick any $v_1\in T_1$ and again applying Lemma \ref{lem main lem} on $T_1$ with $f(v_1):=c_1$ and $n_0=a_2$. Then we extend $f$ to obtain a discrete Morse function on $T_0\cup T_1$ with $v_1$ also critical. Furthermore,  since $\max{f}<a_2$ and $a_2\leq d_1$, if we label $f(e_1):=d_1$,  it follows that $e_1$ is a critical edge.\\

\noindent In general, let $(c_i,d_i)\in D$ with $i>1$ and suppose that $c_i=a_j$. We again apply Lemma \ref{lem tree edge} to $T$ on $\widetilde{E}$, the subset of $E$ consisting of currently unlabeled edges, and let $\widetilde{F}:=T-\widetilde{E}$ be the resulting forest. Then there is a unique tree $\widetilde{T}$ of $\widetilde{F}$ which is labeled by $f$. The Lemma then guarantees that there is an edge $e_i$ with one endpoint in $\widetilde{T}$ and the the other in a different (unlabeled) tree $T_i$.  Choosing a vertex $v_i\in T_i$, label $f(v_i):=c_i$, and apply Lemma \ref{lem main lem} with $n:=a_{j+1}$ to obtain a discrete Morse function on $\widetilde{T}\cup T_i$.  Furthermore,  since $\max\{f\}<a_j$ and $a_j\leq d_i$, if we label $f(e_i):=d_i$,  it follows that $e_i$ is a critical edge. In this way, we obtain a discrete Morse function on all of $T$. Using the language of Definition \ref{def S[i]}, note that  $T_i=S[v_i]$ by the above construction.\\

\noindent Finally, we need to show that $D_f=D$.  For this, we apply Lemma \ref{lem compute persistence}.  The critical vertices are exactly those labeled $c_i$ so that the birth times are correct.  Consider level subcomplex $T_{d_i}$. By construction, $d_i=f(e_i)$ with edge $e_i$ joining trees $S[v_i]$ and $S[v_j]$ and $f(v_j)<f(v_i)$.  Hence $v_i$ dies at $d_i$ i.e. $(c_i,d_i)$ is a persistence pair of $D_f$.
\end{proof}

\subsection{An example}\label{An example}

\noindent We illustrate the construction given in Theorem \ref{thm counting on trees}.  Let $T$ be the tree

$$
\begin{tikzpicture}[scale=.75]

\node[inner sep=2pt, circle] (5) at (0,0) [draw] {};
\node[inner sep=2pt, circle] (45) at (0,2) [draw] {};
\node[inner sep=2pt, circle] (3) at (0,4) [draw] {};
\node[inner sep=2pt, circle] (4) at (0,6) [draw] {};
\node[inner sep=2pt, circle] (0) at (-2,0) [draw] {};
\node[inner sep=2pt, circle] (1) at (-2,2) [draw] {};
\node[inner sep=2pt, circle] (2) at (-2,4) [draw] {};
\node[inner sep=2pt, circle] (9) at (2,2) [draw] {};
\node[inner sep=2pt, circle] (14) at (2,4) [draw] {};
\node[inner sep=2pt, circle] (145) at (2,6) [draw] {};
\node[inner sep=2pt, circle] (15) at (4,4) [draw] {};

\draw[-]  (0)--(1) node[midway] {};
\draw[-]  (2)--(1) node[midway] {};
\draw[-]  (2)--(3) node[midway] {};
\draw[-]  (3)--(4) node[midway] {};
\draw[-]  (3)--(45) node[midway] {};
\draw[-]  (3)--(14) node[midway] {};
\draw[-]  (45)--(9) node[midway] {};
\draw[-]  (5)--(45) node[midway] {};
\draw[-]  (145)--(14) node[midway] {};
\draw[-]  (15)--(14) node[midway] {};

\end{tikzpicture}
$$

\noindent and barcode given by

$$
\begin{tikzpicture}[scale=.6][thick]
    \draw (0,0) -- (20,0);
    \draw (0,0) -- (0,6);
    \draw[thin, dashed] (1,0) -- (1,6);
    \draw[thin, dashed] (2,0) -- (2,6);
    \draw[thin, dashed] (3,0) -- (3,6);
    \draw[thin, dashed] (4,0) -- (4,6);
    \draw[thin, dashed] (5,0) -- (5,6);
    \draw[thin, dashed] (6,0) -- (6,6);
    \draw[thin, dashed] (7,0) -- (7,6);
    \draw[thin, dashed] (8,0) -- (8,6);
    \draw[thin, dashed] (9,0) -- (9,6);
    \draw[thin, dashed] (10,0) -- (10,6);
    \draw[thin, dashed] (11,0) -- (11,6);
    \draw[thin, dashed] (12,0) -- (12,6);
    \draw[thin, dashed] (13,0) -- (13,6);
    \draw[thin, dashed] (14,0) -- (14,6);
    \draw[thin, dashed] (15,0) -- (15,6);
    \draw[thin, dashed] (16,0) -- (16,6);
    \draw[thin, dashed] (17,0) -- (17,6);
    \draw[thin, dashed] (18,0) -- (18,6);
    \draw[thin, dashed] (19,0) -- (19,6);
    \draw[thin, dashed] (20,0) -- (20,6);

    \node at (0,-.5) {$0$};
    \node at (1,-.5) {$1$};
    \node at (2,-.5) {$2$};
    \node at (3,-.5) {$3$};
    \node at (4,-.5) {$4$};
    \node at (5,-.5) {$5$};
    \node at (6,-.5) {$6$};
    \node at (7,-.5) {$7$};
    \node at (8,-.5) {$8$};
    \node at (9,-.5) {$9$};
    \node at (10,-.5) {$10$};
    \node at (11,-.5) {$11$};
    \node at (12,-.5) {$12$};
    \node at (13,-.5) {$13$};
    \node at (14,-.5) {$14$};
    \node at (15,-.5) {$15$};
    \node at (16,-.5) {$16$};
    \node at (17,-.5) {$17$};
    \node at (18,-.5) {$18$};
    \node at (19,-.5) {$19$};
    \node at (20,-.5) {$20$};

    \draw[ultra thick, ->] (0,1) -- (20,1);
    \draw[ultra thick] (3,2) -- (6,2);
    \draw[ultra thick] (5,3) -- (10,3);
    \draw[ultra thick] (9,4) -- (11,4);
    \draw[ultra thick] (14,5) -- (16,5);
    \draw[ultra thick] (15,6) -- (20,6);

\end{tikzpicture}
$$

\noindent We will use the method of Theorem \ref{thm counting on trees} to construct a discrete Morse function $f$ of $T$ that induces the above barcode. Following the proof, we first order the critical values from the barcode
$$
0<3<5<6<9<10<11<14<15<16<20<\infty.
$$
\noindent Next we remove $\frac{m-1}{2}=\frac{11-1}{2}=5$ edges from $T$ to obtain the forest $F$ below.

$$
\begin{tikzpicture}[scale=.75]

\node[inner sep=2pt, circle] (5) at (0,0) [draw] {};
\node[inner sep=2pt, circle] (45) at (0,2) [draw] {};
\node[inner sep=2pt, circle] (3) at (0,4) [draw] {};
\node[inner sep=2pt, circle] (4) at (0,6) [draw] {};
\node[inner sep=2pt, circle] (0) at (-2,0) [draw] {};
\node[inner sep=2pt, circle] (1) at (-2,2) [draw] {};
\node[inner sep=2pt, circle] (2) at (-2,4) [draw] {};
\node[inner sep=2pt, circle] (9) at (2,2) [draw] {};
\node[inner sep=2pt, circle] (14) at (2,4) [draw] {};
\node[inner sep=2pt, circle] (145) at (2,6) [draw] {};
\node[inner sep=2pt, circle] (15) at (4,4) [draw] {};

\draw[-]  (0)--(1) node[midway] {};
\draw[-]  (2)--(1) node[midway] {};
\draw[-]  (3)--(4) node[midway] {};
\draw[-]  (3)--(45) node[midway] {};
\draw[-]  (145)--(14) node[midway] {};

\end{tikzpicture}
$$

\noindent Our first step in labeling is to pick a tree in $F$ and label a vertex $0$.  We then apply Lemma \ref{lem main lem} with $n_0=a_1=3$ to obtain

$$
\begin{tikzpicture}[scale=.75]

\node[inner sep=2pt, circle] (5) at (0,0) [draw] {};
\node[inner sep=2pt, circle] (45) at (0,2) [draw] {};
\node[inner sep=2pt, circle] (3) at (0,4) [draw] {};
\node[inner sep=2pt, circle] (4) at (0,6) [draw] {};
\node[inner sep=2pt, circle] (0) at (-2,0) [draw] {};
\node[inner sep=2pt, circle] (1) at (-2,2) [draw] {};
\node[inner sep=2pt, circle] (2) at (-2,4) [draw] {};
\node[inner sep=2pt, circle] (9) at (2,2) [draw] {};
\node[inner sep=2pt, circle] (14) at (2,4) [draw] {};
\node[inner sep=2pt, circle] (145) at (2,6) [draw] {};
\node[inner sep=2pt, circle] (15) at (4,4) [draw] {};

\draw[-]  (0)--(1) node[midway, left] {$1$};
\draw[-]  (2)--(1) node[midway, left] {$2$};
\draw[-]  (3)--(4) node[midway] {};
\draw[-]  (3)--(45) node[midway] {};
\draw[-]  (145)--(14) node[midway] {};

\node[anchor = north]  at (0) {\small{$0$}};
\node[anchor = east]  at (1) {\small{$1$}};
\node[anchor = east]  at (2) {\small{$2$}};

\end{tikzpicture}
$$

\noindent The first persistence pair with both values finite is $(3,6)$. By Lemma \ref{lem tree edge}, there is a removed edge connecting the labeled tree with an unlabeled tree (in this case, there is only one such edge). We pick a vertex in the unlabeled tree and label it $3$.  Applying the same Lemma as above with $n_0=a_2=5$ and furthermore labeling the connecting edge by $6$, we have

$$
\begin{tikzpicture}[scale=.75]

\node[inner sep=2pt, circle] (5) at (0,0) [draw] {};
\node[inner sep=2pt, circle] (45) at (0,2) [draw] {};
\node[inner sep=2pt, circle] (3) at (0,4) [draw] {};
\node[inner sep=2pt, circle] (4) at (0,6) [draw] {};
\node[inner sep=2pt, circle] (0) at (-2,0) [draw] {};
\node[inner sep=2pt, circle] (1) at (-2,2) [draw] {};
\node[inner sep=2pt, circle] (2) at (-2,4) [draw] {};
\node[inner sep=2pt, circle] (9) at (2,2) [draw] {};
\node[inner sep=2pt, circle] (14) at (2,4) [draw] {};
\node[inner sep=2pt, circle] (145) at (2,6) [draw] {};
\node[inner sep=2pt, circle] (15) at (4,4) [draw] {};

\draw[-]  (0)--(1) node[midway, left] {$1$};
\draw[-]  (2)--(1) node[midway, left] {$2$};
\draw[-]  (2)--(3) node[midway, above] {$6$};
\draw[-]  (3)--(4) node[midway, left] {$4$};
\draw[-]  (3)--(45) node[midway, left] {$4.5$};
\draw[-]  (145)--(14) node[midway] {};

\node[anchor = north]  at (0) {\small{$0$}};
\node[anchor = east]  at (1) {\small{$1$}};
\node[anchor = east]  at (2) {\small{$2$}};
\node[anchor = north west]  at (3) {\small{$3$}};
\node[anchor = east]  at (4) {\small{$4$}};
\node[anchor = east]  at (45) {\small{$4.5$}};

\end{tikzpicture}
$$
\noindent We continue in this manner.  The next persistence pair is $(5,10)$.  Choosing the lowest tree, we label the sole vertex $5$ and the edge $10$.

$$
\begin{tikzpicture}[scale=.75]

\node[inner sep=2pt, circle] (5) at (0,0) [draw] {};
\node[inner sep=2pt, circle] (45) at (0,2) [draw] {};
\node[inner sep=2pt, circle] (3) at (0,4) [draw] {};
\node[inner sep=2pt, circle] (4) at (0,6) [draw] {};
\node[inner sep=2pt, circle] (0) at (-2,0) [draw] {};
\node[inner sep=2pt, circle] (1) at (-2,2) [draw] {};
\node[inner sep=2pt, circle] (2) at (-2,4) [draw] {};
\node[inner sep=2pt, circle] (9) at (2,2) [draw] {};
\node[inner sep=2pt, circle] (14) at (2,4) [draw] {};
\node[inner sep=2pt, circle] (145) at (2,6) [draw] {};
\node[inner sep=2pt, circle] (15) at (4,4) [draw] {};

\draw[-]  (0)--(1) node[midway, left] {$1$};
\draw[-]  (2)--(1) node[midway, left] {$2$};
\draw[-]  (2)--(3) node[midway, above] {$6$};
\draw[-]  (3)--(4) node[midway, left] {$4$};
\draw[-]  (3)--(45) node[midway, left] {$4.5$};
\draw[-]  (5)--(45) node[midway, left] {$10$};
\draw[-]  (145)--(14) node[midway] {};

\node[anchor = north]  at (0) {\small{$0$}};
\node[anchor = east]  at (1) {\small{$1$}};
\node[anchor = east]  at (2) {\small{$2$}};
\node[anchor = east]  at (4) {\small{$4$}};
\node[anchor = east]  at (45) {\small{$4.5$}};
\node[anchor = north]  at (5) {\small{$5$}};
\node[anchor = north west]  at (3) {\small{$3$}};

\end{tikzpicture}
$$

\noindent Three more iterations of this step yields the following discrete Morse function.

$$
\begin{tikzpicture}[scale=.75]

\node[inner sep=2pt, circle] (5) at (0,0) [draw] {};
\node[inner sep=2pt, circle] (45) at (0,2) [draw] {};
\node[inner sep=2pt, circle] (3) at (0,4) [draw] {};
\node[inner sep=2pt, circle] (4) at (0,6) [draw] {};
\node[inner sep=2pt, circle] (0) at (-2,0) [draw] {};
\node[inner sep=2pt, circle] (1) at (-2,2) [draw] {};
\node[inner sep=2pt, circle] (2) at (-2,4) [draw] {};
\node[inner sep=2pt, circle] (9) at (2,2) [draw] {};
\node[inner sep=2pt, circle] (14) at (2,4) [draw] {};
\node[inner sep=2pt, circle] (145) at (2,6) [draw] {};
\node[inner sep=2pt, circle] (15) at (4,4) [draw] {};

\draw[-]  (0)--(1) node[midway, left] {$1$};
\draw[-]  (2)--(1) node[midway, left] {$2$};
\draw[-]  (2)--(3) node[midway, above] {$6$};
\draw[-]  (3)--(4) node[midway, left] {$4$};
\draw[-]  (3)--(45) node[midway, left] {$4.5$};
\draw[-]  (3)--(14) node[midway, above] {$16$};
\draw[-]  (45)--(9) node[midway, below] {$11$};
\draw[-]  (5)--(45) node[midway, left] {$10$};
\draw[-]  (145)--(14) node[midway, right] {$14.5$};
\draw[-]  (15)--(14) node[midway, below] {$20$};

\node[anchor = north]  at (0) {\small{$0$}};
\node[anchor = east]  at (1) {\small{$1$}};
\node[anchor = east]  at (2) {\small{$2$}};
\node[anchor = east]  at (4) {\small{$4$}};
\node[anchor = east]  at (45) {\small{$4.5$}};
\node[anchor = north]  at (5) {\small{$5$}};
\node[anchor = north]  at (9) {\small{$9$}};
\node[anchor = north]  at (14) {\small{$14$}};
\node[anchor = south]  at (145) {\small{$14.5$}};
\node[anchor = west]  at (15) {\small{$15$}};
\node[anchor = north west]  at (3) {\small{$3$}};

\end{tikzpicture}
$$

\noindent While Theorem \ref{thm counting on trees} guarantees this discrete Morse function induces the desired barcode, it is also easy to check by hand that this discrete Morse function induces the desired barcode.


\end{document}